\newtheorem{theorem}{Theorem}[section]
\newtheorem{proposition}[theorem]{Proposition}
\newtheorem{lemma}[theorem]{Lemma}
\newtheorem{corollary}[theorem]{Corollary}
\theoremstyle{definition}
\newtheorem{definition}[theorem]{Definition}
\newtheorem{remark}[theorem]{Remark}
\newtheorem{note}[theorem]{Note}
\newtheorem{example}[theorem]{Example}
\DeclareMathOperator{\GL}{GL}
\DeclareMathOperator{\Frac}{Frac}
\DeclareMathOperator{\im}{Im}
\newcommand{\F}{\mathbb{F}}
\newcommand{\StDeltai}{\mathrm{St}^{\Delta_i}}
\newcommand{\Qns}{Q_{n,s}}
\newcommand{\Qnzero}{Q_{n,0}}
\newcommand{\Ln}{L_n}
\newcommand{\Lns}{L_{n,s}}
\newcommand{\SumCite}{\cite{Sum}}
\def\DD{D\kern-.7em\raise0.4ex\hbox{\char '55}\kern.33em}
\title[Normalized derivations for Steenrod--Milnor operations]{Normalized Derivations for Steenrod--Milnor Operations on the Dickson Algebra and Applications}
\author{\DD\d{\u a}ng V\~o Ph\'uc}
\address{Department of Mathematics, FPT University, Quy Nhon AI Campus, An Phu Thinh New Urban Area, Vietnam}
\email{dangphuc150488@gmail.com, phucdv14@fpt.edu.vn}
\thanks{ORCID: \url{https://orcid.org/0000-0002-6885-3996}}
\keywords{Dickson invariants, Steenrod algebra, Milnor operations, modular invariant theory, derivation, Koszul complex.}
\subjclass[2020]{55S10, 13A50, 55N99.}
\begin{document}

\begin{abstract}
Understanding the action of the Steenrod algebra on the Dickson algebra $D_n$ is a central problem in modular invariant theory and algebraic topology. While explicit first-order formulas for the primitive Steenrod--Milnor operations $\StDeltai$ have been established, computing their higher iterates has historically been obstructed by severe combinatorial complexity. In this paper, we resolve this obstruction for all $i\ge 1$ by introducing a normalized derivation $\delta_i=(-1)^nQ_{n,0}^{-1}\StDeltai$. Starting from an explicit formula in a recent paper by Nguyen Sum, we show that $\delta_i$ preserves $D_n$ and elegantly transforms the non-linear action into a tractable affine differential system.

By resolving the non-commutativity of the operators via unsigned Stirling numbers of the first kind, we derive closed formulas for all higher iterates $\delta_i^{\,m}(Q_{n,s})$ and $(\StDeltai)^m(Q_{n,s})$, which systematically yield the ambient nilpotence identity $(\StDeltai)^p=0$ on $D_n$. Furthermore, we discover that the normalized derivation satisfies a restricted $p$-power relation $\delta_i^{\,p}=B^{p-1}\delta_i$, where $B=R_{n,i}^{\,p}$. When $B\ne0$, localizing at $B$ allows us to construct an explicit $\F_p$-weight decomposition of the localized Dickson algebra using Lagrange projectors. This multiplicative decomposition provides exact descriptions of the kernels and images of both $\delta_i$ and $\StDeltai$.

As applications, we completely determine the kernel and image of $\delta_i$ in the classical range $1\le i<n$, and describe them via an auxiliary polynomial grading for $i=n$. The general formalism unifies and upgrades the classical first-order formulas of Smith--Switzer and Wilkerson, as well as Sum's formulas for $i=n+1,n+2$, into exact closed expressions for all higher iterates. Finally, we formulate an ordinary commutative-algebra Koszul complex associated with the normalized-ratio coefficients, providing a structural clarification against the square-zero Margolis complexes recently studied in the literature.
\end{abstract}

\maketitle
\enlargethispage{2pt}

\section{Introduction}

Let $p$ be an odd prime and $n\ge 1$, and let $P_n=\F_p[x_1,\dots,x_n]$ be the polynomial algebra graded by $|x_j|=2$. The general linear group $\GL_n(\F_p)$ acts naturally on $P_n$ through its linear action on the vector space spanned by $x_1,\dots,x_n$. In a foundational work, Dickson \cite{Dickson} showed that the invariant algebra $P_n^{\GL_n(\F_p)}$ is a polynomial algebra generated by the so-called Dickson invariants $Q_{n,0},Q_{n,1},\dots,Q_{n,n-1}$. Accordingly, we write
\[
D_n:=P_n^{\GL_n(\F_p)}=\F_p[\Qnzero,Q_{n,1},\dots,Q_{n,n-1}]
\]
and refer to it as the Dickson algebra. The interaction between modular invariants and Steenrod operations has been a subject of extensive study, with significant contributions from Adams--Wilkerson \cite{AdamsWilkerson,AdamsWilkersonCorrection}, Smith--Switzer \cite{SmithSwitzer}, Wilkerson \cite{WilkersonPrimer}, M\`ui \cite{Mui1986}, and Sum \cite{Sum1992,Sum1994,Sum2007,Sum}. 

For an odd prime $p$, Milnor \cite{Milnor} identified the dual mod-$p$ Steenrod algebra as
\[
\mathcal A_p^*\cong E(\tau_0,\tau_1,\dots)\otimes P(\xi_1,\xi_2,\dots).
\]
Following Sum's notation \cite{Sum2007,Sum}, for every $i\ge 1$ we write
\[
\StDeltai=\mathrm{St}^{\emptyset,\Delta_i},
\qquad
\Delta_i=(0,\dots,0,1,0,\dots),
\]
where the $1$ occurs in the $i$-th position. Thus $\StDeltai$ is the Steenrod--Milnor operation dual to the monomial $\xi_i$. It is crucial to distinguish this from the usual Milnor primitive $Q_i$, which lies on the exterior $\tau$-side and is dual to $\tau_i$. The operation $\StDeltai$ is primitive and therefore acts as a derivation on $P_n$. Since $P_n$ is concentrated in even degrees and $\StDeltai$ has even degree, no Koszul sign enters the Leibniz rule. The point of departure for the present work is Sum's explicit formula \cite[Theorem~2.2]{Sum}, which states that for $0\le s<n$ and $i\ge1$,
\[
\StDeltai(\Qns)=(-1)^n \Qnzero\Big(P_{n,i,s}^{\,p}+R_{n,i}^{\,p}\,\Qns\Big).
\]

The systematic appearance of the common factor $\Qnzero$ motivates the main conceptual leap of this paper. By dividing out this factor, we introduce the normalized operator
\[
\delta_i:=(-1)^n\Qnzero^{-1}\StDeltai
\]
on the localization $D_n[\Qnzero^{-1}]$. Because $\StDeltai$ is a derivation, $\delta_i$ is a genuine $\F_p$-linear derivation, and Sum's formula implies that it actually preserves $D_n$. By writing $A_s=P_{n,i,s}^{\,p}$ and $B=R_{n,i}^{\,p}$, the action becomes a clean affine differential system: $\delta_i(Q_{n,s})=A_s+BQ_{n,s}$.

This normalized-derivation viewpoint unlocks several deeper algebraic structures. First, by expanding the non-commuting product of $Q_{n,0}$ and $\delta_i$ via unsigned Stirling numbers of the first kind, we derive a closed formula for all higher iterates of $\StDeltai$, culminating in the ambient nilpotence identity $(\StDeltai)^p=0$. Second, and more strikingly, we discover that the normalized derivation satisfies a restricted $p$-power relation: $\delta_i^p=B^{p-1}\delta_i$. When $B\ne0$, localizing further by inverting $B$ turns the rescaled derivation $\partial_i:=B^{-1}\delta_i$ into an idempotent operator ($\partial_i^p=\partial_i$). This allows us to construct an explicit $\F_p$-weight decomposition of the localized Dickson algebra using Lagrange projectors, completely determining the kernel and image of the action. 

Finally, this paper addresses the structural role of Dickson algebras in homological contexts. Recent works have utilized invariant algebras to compute Margolis homology (e.g., Hung \cite{Hung2020,Hung2021} and Tuan \cite{Tuan}). Those calculations strictly rely on the square-zero Milnor primitives $Q_j$. By contrast, since $\StDeltai$ lies on the polynomial $\xi$-side and is not generally square-zero, we show that our normalized ratios naturally give rise to an ordinary commutative-algebra Koszul complex, framing a careful structural comparison with Margolis complexes.

In summary, the main contributions of this paper are the following:
\begin{itemize}
    \item We introduce the normalized derivation $\delta_i$ and establish the foundational affine differential system on the Dickson generators.
    \item We overcome the combinatorial complexity of the action by using Stirling numbers to derive a closed formula for $(\StDeltai)^m(Q_{n,s})$ for all $m \ge 1$, which systematically yields the operator identity $(\StDeltai)^p=0$ on $D_n$.
    \item We prove the restricted $p$-power relation $\delta_i^p=B^{p-1}\delta_i$ and construct a multiplicative $\F_p$-weight decomposition of the localized Dickson algebra via explicit Lagrange projectors.
    \item We provide exact kernel and image descriptions for both $\delta_i$ and $\StDeltai$, utilizing the weight decomposition and invariant ratios.
    \item We show that the general formalism unifies, recovers, and upgrades classical first-order formulas (for $1\le i\le n$) and Sum's low extra formulas (for $i=n+1,n+2$) into closed expressions for all higher iterates.
    \item We construct a formal Koszul complex associated with the normalized-ratio coefficients and clarify its algebraic distinction from Margolis homology.
\end{itemize}

The paper is organized as follows. Section~\ref{sec:main} recalls the determinant notation, introduces the normalized derivation framework, and states all principal theorems, including the iterate formulas, the restricted $p$-power relation, and the $\F_p$-weight decomposition. Section~\ref{sec:proofs} is dedicated to the rigorous proofs; it details the algebraic manipulations involving Stirling numbers for the iterate formulas and constructs the Lagrange projectors for the weight spaces. Section~\ref{sec:explicit} applies this general machinery to specific ranges, upgrading known first-order identities for the classical range ($1 \le i \le n$) and low extra range ($i=n+1, n+2$) to their higher-iterate counterparts. Finally, Section~\ref{sec:koszul} develops the associated Koszul constructions and contrasts their commutative-algebra mechanism with existing Margolis-homology literature.

\section{Preliminaries and main results}\label{sec:main}

This section establishes the algebraic framework used throughout the paper. We first recall the determinant definitions of the Dickson invariants and Sum's formula for the primitive operations. We then introduce the normalized derivation and state its chain-rule, iterate, restricted-power, ratio, weight-decomposition, kernel, and image consequences. The operator identities and the remaining proofs are given in Section~\ref{sec:proofs}.

\subsection{Dickson invariants and the Sum formula}

We begin by fixing notation, following Dickson \cite{Dickson} and Sum \cite[\S 2]{Sum}. For integers $e_1,\dots,e_n\ge 0$, let
\[
[e_1,\dots,e_n]
:=
\det\big(x_r^{p^{e_s}}\big)_{1\le r,s\le n}
\in P_n.
\]
Following Dickson, put
\[
\Ln=[0,1,\dots,n-1],
\qquad
\Lns=[0,1,\dots,\widehat{s},\dots,n]
\quad(0\le s<n),
\]
where the hat means omission. The Dickson invariants are then defined by
\[
\Qns=\frac{\Lns}{\Ln}
\quad(0\le s<n),
\qquad
\Qnzero=\Ln^{\,p-1}.
\]
It is classical that
\[
D_n=\F_p[\Qnzero,Q_{n,1},\dots,Q_{n,n-1}]\subset \Frac(P_n)
\]
is a polynomial algebra \cite{Dickson}. We will frequently work in the localization $D_n[\Qnzero^{-1}]$.

\begin{example}[The case $n=2$]\label{ex:dickson-n2}
For $n=2$, one has
\[
L_2=[0,1]=
\det\begin{pmatrix} x_1 & x_1^p\\ x_2 & x_2^p \end{pmatrix}
=
x_1x_2^p-x_1^px_2.
\]
Moreover,
\[
L_{2,0}=[1,2],
\qquad
L_{2,1}=[0,2],
\]
so that
\[
Q_{2,0}=L_2^{\,p-1},
\qquad
Q_{2,1}=\frac{L_{2,1}}{L_2}.
\]
This is the smallest case involving a Dickson generator \(Q_{n,s}\) with \(s>0\), and it serves as a concrete model for the determinant notation.
\end{example}

\begin{remark}[On notation and independent ratio variables]\label{rem:R0-one}
For later use we set
\[
R_s:=\frac{Q_{n,s}}{Q_{n,0}} \qquad (0\le s\le n-1).
\]
Then $R_0=1$. Thus the genuinely independent normalized ratios are
\[
R_1,\dots,R_{n-1},
\]
and one has
\[
D_n[\Qnzero^{-1}]\cong \F_p[\Qnzero^{\pm 1},R_1,\dots,R_{n-1}].
\]
We keep the symbol $R_0=1$ for notational uniformity, but whenever we speak about ``ratio variables'' the independent variables are $R_1,\dots,R_{n-1}$.
\end{remark}

The action of $\StDeltai$ on Dickson generators is governed by the following:

\begin{theorem}[Sum {\cite[Theorem~2.2]{Sum}}]\label{thm:CF}
For every $s\in\{0,\dots,n-1\}$ and every $i\ge1$, one has
\[
\StDeltai(\Qns)=(-1)^n \Qnzero\Big(P_{n,i,s}^{\,p}+R_{n,i}^{\,p}\,\Qns\Big),
\]
where
\[
P_{n,i,0}:=0,
\qquad
P_{n,i,s}:=-\frac{[0,1,\dots,\widehat{s-1},\dots,n-1,i-1]}{\Ln}
\quad (s>0),
\]
and
\[
R_{n,i}:=\frac{[0,1,\dots,n-2,i-1]}{\Ln}.
\]
Here the hat means omission, and the determinant quotients above belong to $D_n$. When \(n=1\), the initial string \(0,1,\dots,n-2\) is empty, so
\[
[0,1,\dots,n-2,i-1]=[i-1].
\]
Equivalently, if one formally extends the notation by
\[
P_{n,i,n}:=-\frac{[0,1,\dots,n-2,i-1]}{\Ln},
\]
then $R_{n,i}=-P_{n,i,n}$.
\end{theorem}

\begin{example}[A concrete family from Sum]\label{ex:sum-family}
One useful special case is $i=n+1$. By \cite[Corollary~2.6]{Sum}, one has
\[
\StDeltai(\Qns)
=
(-1)^n\Qnzero\Big(-Q_{n,s-1}^{\,p}+Q_{n,n-1}^{\,p}\Qns\Big)
\qquad (0\le s<n),
\]
with the convention $Q_{n,-1}=0$. Thus, in this case,
\[
P_{n,n+1,s}=-Q_{n,s-1}
\qquad (0\le s<n),
\qquad
R_{n,n+1}=Q_{n,n-1}.
\]
In particular, the case $s=0$ gives
\[
P_{n,n+1,0}=0
\]
because $Q_{n,-1}=0$. This example is helpful to keep in mind: it shows concretely that the normalized action is affine-linear in the Dickson generators.
\end{example}

\begin{lemma}[Primitivity and the Leibniz rule]\label{lem:primitive-leibniz}
The Steenrod--Milnor operation
\[
\StDeltai=\mathrm{St}^{\emptyset,\Delta_i}
\]
is primitive in the Steenrod algebra. Equivalently,
\[
\Delta_{\mathcal A}(\StDeltai)
=
\StDeltai\otimes 1+1\otimes \StDeltai .
\]
Consequently, on $P_n$ it satisfies the ordinary Leibniz rule
\[
\StDeltai(fg)=\StDeltai(f)g+f\StDeltai(g)
\qquad (f,g\in P_n).
\]
\end{lemma}

\begin{proof}
This is a standard consequence of Milnor's dual description of the Steenrod algebra \cite{Milnor}.
Under the duality between the coproduct in the Steenrod algebra and multiplication in
\[
\mathcal A_p^*\cong E(\tau_0,\tau_1,\dots)\otimes P(\xi_1,\xi_2,\dots),
\]
the basis element dual to the pure monomial $\xi_i$ is primitive: in the monomial basis, the only factorizations of $\xi_i$ are
\[
1\cdot \xi_i
\qquad\text{and}\qquad
\xi_i\cdot 1.
\]
Thus
\[
\Delta_{\mathcal A}(\StDeltai)
=
\StDeltai\otimes 1+1\otimes\StDeltai.
\]
The displayed Leibniz rule then follows from the Cartan formula for the Steenrod action on $P_n$; see, for example, \cite{SteenrodEpstein}. Since $P_n$ is concentrated in even degrees and $\StDeltai$ has even degree, no Koszul sign occurs.
\end{proof}

\begin{remark}[Convention on localizations and fraction fields]\label{rem:localization-convention}
Whenever $\StDeltai$ is applied to an element of a localization of $D_n$, or to an element of the fraction field of such a localization, we mean the unique derivation extension of the restricted derivation
\[
\StDeltai|_{D_n}:D_n\longrightarrow D_n.
\]
Thus, for a multiplicative set $S\subset D_n$ and $a\in D_n$, $u\in S$, one has
\[
\StDeltai\left(\frac{a}{u}\right)
=
\frac{\StDeltai(a)u-a\StDeltai(u)}{u^2}.
\]
This convention is used in particular for the localizations
\[
D_n[\Qnzero^{-1}],
\qquad
D_n[\Qnzero^{-1},B^{-1}],
\]
and for their fraction fields. It should be understood as a derivation extension, not as an additional Steenrod-algebra action on arbitrary fractions.
\end{remark}

\subsection{The normalized derivation}

Theorem~\ref{thm:CF} suggests dividing out the common factor $\Qnzero$.

\begin{proposition}[The normalized operator]\label{prop:normalized-derivation}
On the localization $D_n[\Qnzero^{-1}]$, the operator
\[
\delta_i:=(-1)^n\Qnzero^{-1}\StDeltai
: D_n[\Qnzero^{-1}]\longrightarrow D_n[\Qnzero^{-1}]
\]
is a well-defined $\F_p$-linear derivation. Its values on the Dickson generators are
\[
\delta_i(\Qns)=P_{n,i,s}^{\,p}+R_{n,i}^{\,p}\,\Qns.
\]
In particular, for every $F\in D_n[\Qnzero^{-1}]$,
\[
\delta_i(F^p)=0,
\qquad
\StDeltai(F^p)=0.
\]
\end{proposition}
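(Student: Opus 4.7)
The plan is to reduce the proposition to three standard facts about derivations on commutative rings, using Theorem \ref{thm:CF} only at one decisive moment. First, I would note that $\StDeltai$ already acts as a (graded) derivation on $P_n$ by a standard property of Milnor primitives; since every element of $D_n$ lives in even degree (or the sign is irrelevant at $p=2$), this restricts to an ordinary derivation on $D_n$ once one checks $\StDeltai(D_n)\subseteq D_n$. That inclusion is immediate from Theorem \ref{thm:CF} applied to each generator $\Qns$, combined with the Leibniz rule.

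Next, I would extend $\StDeltai$ to the localization $D_n[\Qnzero^{-1}]$. Any derivation on an integral domain extends uniquely to any localization via the quotient rule, so the only content is to verify that the image of this extension lands inside $D_n[\Qnzero^{-1}]$. This reduces to computing $\StDeltai(\Qnzero)$ itself: Theorem \ref{thm:CF} at $s=0$, using $P_{n,i,0}=0$, yields
\[
\StDeltai(\Qnzero)=(-1)^n\,\Qnzero^{\,2}\,R_{n,i}^{\,p}\in D_n,
\]
whence $\StDeltai(\Qnzero^{-1})=(-1)^{n+1}R_{n,i}^{\,p}$, and so $\StDeltai$ preserves $D_n[\Qnzero^{-1}]$.

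With $\StDeltai$ now an $\F_p$-linear derivation on $D_n[\Qnzero^{-1}]$, the key remaining observation is that in a commutative ring, rescaling a derivation by \emph{any} element of the ring yields another derivation: if $D(fg)=D(f)g+fD(g)$ and $a\in R$, then $aD(fg)=aD(f)g+f\cdot aD(g)$ by commutativity. Applied to $a=(-1)^n\Qnzero^{-1}$ and $D=\StDeltai$, this makes $\delta_i$ a derivation. Its value on a generator is then Sum's identity divided by $(-1)^n\Qnzero$, yielding $P_{n,i,s}^{\,p}+R_{n,i}^{\,p}\Qns$ as claimed. Finally, $\delta_i(F^p)=pF^{p-1}\delta_i(F)=0$ is the standard vanishing of any characteristic-$p$ derivation on $p$-th powers, and unwinding the definition gives $\StDeltai(F^p)=(-1)^n\Qnzero\,\delta_i(F^p)=0$.

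I do not anticipate a serious obstacle here; the statement is essentially a clean algebraic repackaging of Theorem \ref{thm:CF}. The most delicate point is the well-definedness of the extension to the localization, which hinges precisely on the fact that Sum's formula produces the factor $\Qnzero$ on every generator --- without this factor, the extension would leave $D_n[\Qnzero^{-1}]$. This is the structural reason the normalization works at all, and it is exactly what makes the subsequent constant-coefficient and Koszul-homology applications possible.
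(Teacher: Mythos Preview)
Your proof is correct and follows essentially the same route as the paper: $\StDeltai$ is a derivation on $P_n$ stabilizing $D_n$, it extends uniquely to the localization, multiplying a derivation by any ring element preserves the Leibniz rule, the generator values are read off from Theorem~\ref{thm:CF}, and $\delta_i(F^p)=pF^{p-1}\delta_i(F)=0$ is standard.

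One small correction to your closing commentary, though it does not affect the proof: the extension of $\StDeltai$ to $D_n[\Qnzero^{-1}]$ is well-defined and lands in the localization \emph{automatically}, by the quotient rule, regardless of any $\Qnzero$ factor in Sum's formula. The significance of that factor is different: it guarantees that $\delta_i$ already sends $D_n$ into $D_n$ (not merely into the localization), which is what drives the later ideal containments such as $\im(\StDeltai)\subseteq(\Qnzero)$ and the iterate formulas, but is not needed for the present proposition.
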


\begin{remark}[Restriction of the normalized derivation]\label{rem:delta-restriction}
A priori, the operator
\[
\delta_i=(-1)^n\Qnzero^{-1}\StDeltai
\]
is defined on the localization $D_n[\Qnzero^{-1}]$. However, Theorem~\ref{thm:CF} gives
\[
\delta_i(Q_{n,s})
=
P_{n,i,s}^{\,p}+R_{n,i}^{\,p}Q_{n,s}
\in D_n
\qquad (0\le s<n)
\]
for every $i$ in the range of Theorem~\ref{thm:CF}. Since
\[
D_n=\F_p[\Qnzero,Q_{n,1},\dots,Q_{n,n-1}]
\]
is generated by the Dickson generators, it follows that
\[
\delta_i(D_n)\subseteq D_n.
\]
Thus, throughout the paper, whenever no localization is explicitly needed, we regard
\[
\delta_i:D_n\longrightarrow D_n
\]
as the restricted normalized derivation. In particular, any statement concerning
$\ker(\delta_i)$ or $\im(\delta_i)$ on $D_n$ is to be understood for this restricted operator.
\end{remark}

As usual for derivations, this yields a chain rule.

\begin{remark}[Chain rule for Dickson polynomials]\label{rem:chain-rule}
Let $f\in\F_p[X_0,\dots,X_{n-1}]$ and write
\[
\underline{Q}=(\Qnzero,Q_{n,1},\dots,Q_{n,n-1}).
\]
Then
\[
\StDeltai\big(f(\underline{Q})\big)
=
(-1)^n\Qnzero\sum_{s=0}^{n-1}
\frac{\partial f}{\partial X_s}(\underline{Q})
\big(P_{n,i,s}^{\,p}+R_{n,i}^{\,p}Q_{n,s}\big).
\]
Thus the action of $\StDeltai$ on arbitrary Dickson polynomials reduces to formal differentiation.
\end{remark}

\begin{example}[A one-line computation with the chain rule]\label{ex:chain-rule-small}
Take $f(X_0,\dots,X_{n-1})=X_t^2$ for some $t$. Since
\[
\frac{\partial f}{\partial X_t}=2X_t,
\qquad
\frac{\partial f}{\partial X_s}=0\ (s\ne t),
\]
Remark~\ref{rem:chain-rule} gives
\[
\StDeltai(Q_{n,t}^2)
=
(-1)^n\Qnzero\cdot 2Q_{n,t}\big(P_{n,i,t}^{\,p}+R_{n,i}^{\,p}Q_{n,t}\big).
\]
For odd $p$ this is the expected Leibniz-rule identity
\[
\StDeltai(Q_{n,t}^2)=2Q_{n,t}\StDeltai(Q_{n,t}).
\]
\end{example}

\subsection{Iterates of the action}

\begin{theorem}[Closed form for iterates of $\delta_i$ and $\StDeltai$]\label{thm:iterate-St}
Fix $n,p$ and $i$. For $s\in\{0,1,\dots,n-1\}$ set
\[
A_s:=P_{n,i,s}^{\,p},
\qquad
B:=R_{n,i}^{\,p}.
\]
Then, for every integer $m\ge 1$,
\begin{equation}\label{eq:B1}
\delta_i^{\,m}(Q_{n,s})
=
B^{\,m}Q_{n,s}+B^{\,m-1}A_s.
\end{equation}
and
\begin{equation}\label{eq:B2}
(\StDeltai)^{\,m}(Q_{n,s})
=
(-1)^{mn}m!\,\Qnzero^{\,m}\Big(B^{\,m}Q_{n,s}+B^{\,m-1}A_s\Big).
\end{equation}
Here $m!$ is viewed as an element of the base field $\F_p$. It arises from the identity
\[
\sum_{k=0}^m \begin{bmatrix} m \\ k \end{bmatrix}=m!,
\]
where $\begin{bmatrix} m \\ k \end{bmatrix}$ denotes the unsigned Stirling numbers of the first kind.
\end{theorem}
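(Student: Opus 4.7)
The plan is to prove both identities by induction on $m$, using the derivation property of $\delta_i$ from Proposition \ref{prop:normalized-derivation} together with two simple vanishing observations.

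\textbf{Preliminary identities.} First I would record the ingredients that drive the induction. Since $A_s = P_{n,i,s}^{p}$ and $B = R_{n,i}^{p}$ are $p$-th powers, the last sentence of Proposition \ref{prop:normalized-derivation} yields $\delta_i(A_s)=0$ and $\delta_i(B)=0$; equivalently $\StDeltai$ also annihilates these elements. Specializing Theorem \ref{thm:CF} at $s=0$ (with $P_{n,i,0}=0$) and dividing by $(-1)^n\Qnzero$ gives the single crucial identity
\[
\delta_i(\Qnzero) = B\,\Qnzero, \qquad \text{equivalently}\qquad \StDeltai(\Qnzero) = (-1)^n B\,\Qnzero^{2}.
\]

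\textbf{Proof of (A.1).} The base case $m=1$ is the formula $\delta_i(Q_{n,s})=A_s+BQ_{n,s}$ from Proposition \ref{prop:normalized-derivation}. For the inductive step, apply $\delta_i$ to $B^{m}Q_{n,s}+B^{m-1}A_s$ via Leibniz: since $\delta_i$ kills both $B$ and $A_s$, only the single term $B^{m}\,\delta_i(Q_{n,s})$ survives, and expanding this produces exactly $B^{m+1}Q_{n,s}+B^{m}A_s$.

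\textbf{Proof of (A.2).} The base case $m=1$ is the definition of $\delta_i$. For the induction step, apply $\StDeltai$ to
\[
(\StDeltai)^{m-1}(Q_{n,s}) = (-1)^{(m-1)n}(m-1)!\,\Qnzero^{\,m-1}\,X_{m-1}, \qquad X_{m-1}:=B^{\,m-1}Q_{n,s}+B^{\,m-2}A_s,
\]
using Leibniz on the product $\Qnzero^{\,m-1}\cdot X_{m-1}$. Differentiating $\Qnzero^{\,m-1}$ contributes $(m-1)\Qnzero^{\,m-2}\StDeltai(\Qnzero)\cdot X_{m-1}=(-1)^{n}(m-1)B\,\Qnzero^{\,m}\,X_{m-1}$; differentiating $X_{m-1}$ only touches the $Q_{n,s}$ factor (the $B$'s and $A_s$ are $p$-th powers, hence killed by $\StDeltai$) and yields $(-1)^{n}B^{\,m-1}\Qnzero(A_s+BQ_{n,s})$. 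Factoring out $(-1)^{mn}\Qnzero^{\,m}$ reduces the bracket to
\[
(m-1)B\,X_{m-1} + B^{\,m-1}(A_s+BQ_{n,s}) \;=\; m\bigl(B^{\,m}Q_{n,s}+B^{\,m-1}A_s\bigr),
\]
which upgrades $(m-1)!$ to $m!$ and delivers (A.2).

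\textbf{Main obstacle.} There is no serious obstacle once the preliminary identities are established; the inductions are mechanical Leibniz expansions. The one algebraic fact to verify by hand is the collapse of the two Leibniz contributions in the induction step for (A.2) into a single factor of $m$ multiplying $B^{\,m-1}(A_s+BQ_{n,s})$, which is precisely what promotes $(m-1)!$ to $m!$ rather than producing a more complicated coefficient. The Stirling-number interpretation mentioned in the theorem presumably arises from expanding $(\StDeltai)^m$ directly and enumerating the resulting derivation patterns by cycle type, but the inductive route sidesteps this combinatorial detour entirely.
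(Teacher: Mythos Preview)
Your proof is correct. For (A.1) you argue exactly as the paper does: straight induction using $\delta_i(A_s)=\delta_i(B)=0$. For (A.2), however, you take a genuinely different route. The paper factors $(\StDeltai)^m=(-1)^{mn}(M\delta_i)^m$ with $M=$ multiplication by $\Qnzero$, then proves two operator identities (Lemma~\ref{lem:commutation-general}: $\delta_i M^m=M^m(\delta_i+mB)$, and Proposition~\ref{prop:standard-identity-correct}: $(M\delta_i)^m=M^m\sum_{j}\begin{bmatrix}m\\j\end{bmatrix}B^{m-j}\delta_i^j$); only after applying these and plugging in (A.1) does the sum $\sum_j\begin{bmatrix}m\\j\end{bmatrix}=m!$ appear. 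You instead do the single-step induction directly on the product $\Qnzero^{m-1}X_{m-1}$ and observe that the two Leibniz contributions add to a clean factor of $m$, promoting $(m-1)!$ to $m!$. The paper actually concedes in a remark that this direct induction works; it opts for the operator approach to make explicit \emph{why} the factorial, rather than some other constant, emerges---namely via the Stirling recurrence---whereas your argument obtains the factorial more cheaply but without that combinatorial explanation. Both are valid; yours is shorter, the paper's is more structural.
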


\begin{corollary}\label{cor:iterate-ideal}
For all $m\ge 1$ and all $s$,
\[
(\StDeltai)^{\,m}(Q_{n,s})\in (\Qnzero^{\,m}).
\]
Moreover,
\[
(\StDeltai)^{\,m}(Q_{n,s})=0
\qquad \text{for all } m\ge p.
\]
In fact, the operator identity
\[
(\StDeltai)^p=0
\qquad \text{on } D_n
\]
holds. Consequently $(\StDeltai)^m=0$ on $D_n$ for every $m\ge p$.
\end{corollary}

\begin{proof}
The ideal-containment statement and the vanishing on Dickson generators follow immediately from Theorem~\ref{thm:iterate-St}. To prove the operator identity on $D_n$, set
\[
D:=\StDeltai.
\]
The operation $D$ acts as an ordinary derivation on $D_n$. Since the characteristic is $p$, the $p$-fold iterate $D^p$ is again a derivation: for $f,g\in D_n$,
\[
D^p(fg)=\sum_{k=0}^{p}\binom pk D^k(f)D^{p-k}(g)
=D^p(f)g+fD^p(g).
\]
By the generator formula above, $D^p(Q_{n,s})=0$ for every Dickson generator $Q_{n,s}$, $0\le s<n$. Since $D_n$ is the polynomial algebra generated by these elements, the derivation $D^p$ is identically zero on $D_n$.
\end{proof}

\begin{remark}[Ambient \(p\)-nilpotence]\label{rem:ambient-p-nilpotence}
The operator identity in Corollary~\ref{cor:iterate-ideal} is compatible with the standard action on the ambient polynomial algebra. Indeed,
\[
\StDeltai(x_r)=x_r^{p^i}
\qquad (1\le r\le n);
\]
see \cite{Sum2007,SteenrodEpstein}. Since \(\StDeltai\) is a derivation, it annihilates the \(p\)-th power \(x_r^{p^i}\). Moreover, \((\StDeltai)^p\) is a derivation in characteristic \(p\). It vanishes on every polynomial generator \(x_r\), and hence
\[
(\StDeltai)^p=0
\qquad\text{on }P_n.
\]
The statement on \(D_n\) is therefore also the restriction of this ambient identity. The content of Theorem~\ref{thm:iterate-St} is finer: it determines every intermediate iterate on each Dickson generator and records its precise divisibility by \(Q_{n,0}^m\).
\end{remark}

\begin{proposition}[Restricted \(p\)-power relation]\label{prop:delta-p-relation}
With
\[
B=R_{n,i}^{\,p},
\]
the normalized derivation satisfies
\begin{equation}\label{eq:delta-p-relation}
\delta_i^{\,p}=B^{p-1}\delta_i
\end{equation}
on \(D_n\) and on \(D_n[Q_{n,0}^{-1}]\). Consequently, for every integer \(m\ge1\),
\begin{equation}\label{eq:delta-periodicity}
\delta_i^{\,m+p-1}=B^{p-1}\delta_i^{\,m}.
\end{equation}
In particular, \(B=0\) implies \(\delta_i^p=0\), whereas \(B=1\) implies \(\delta_i^p=\delta_i\).
\end{proposition}

\begin{example}[The third iterate]\label{ex:efficiency}
For \(m=3\), Theorem~\ref{thm:iterate-St} gives directly
\[
(\StDeltai)^3(Q_{n,s})
=
(-1)^n6Q_{n,0}^{\,3}
\big(B^3Q_{n,s}+B^2A_s\big).
\]
This expression is zero when \(p=3\), while for \(p>3\) it gives the third iterate without expanding successive Leibniz rules. The Stirling-number identity in Proposition~\ref{prop:stirling-identity} explains the coefficient \(3!=6\).
\end{example}

\subsection{Kernel and image constructions}

\begin{proposition}[A kernel family]\label{prop:C-kernel}
For every $F(\underline{Q})\in D_n$ and every $s\in\{0,\dots,n-1\}$,
\[
\StDeltai\big(\Qnzero^{\,p-1}F(\underline{Q})^{\,p}Q_{n,s}\big)
=
(-1)^n\Qnzero^{\,p}\big(F(\underline{Q})P_{n,i,s}\big)^p.
\]
In particular, the element on the right-hand side belongs to $\ker(\StDeltai)$.
\end{proposition}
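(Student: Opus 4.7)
The plan is to transfer the entire computation into the normalized derivation $\delta_i$ of Proposition~\ref{prop:normalized-derivation}, so that the Leibniz rule does the heavy lifting. Writing $\StDeltai = (-1)^n \Qnzero\,\delta_i$, I would first evaluate $\delta_i$ on the triple product $\Qnzero^{p-1}F(\underline{Q})^p Q_{n,s}$ and then multiply by $(-1)^n \Qnzero$ to recover $\StDeltai$.

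Expanding with the three-factor Leibniz rule, the middle term $\Qnzero^{p-1}\delta_i(F^p)Q_{n,s}$ vanishes immediately because $\delta_i$ annihilates $p$-th powers (final clause of Proposition~\ref{prop:normalized-derivation}). For the outer factor, the chain rule gives $\delta_i(\Qnzero^{p-1}) = (p-1)\Qnzero^{p-2}\delta_i(\Qnzero) = -\Qnzero^{p-2}\delta_i(\Qnzero)$. Here the hypothesis $P_{n,i,0}=0$ from Theorem~\ref{thm:CF} becomes decisive: it reduces $\delta_i(\Qnzero)$ to $R_{n,i}^{\,p}\,\Qnzero$, whence $\delta_i(\Qnzero^{p-1}) = -R_{n,i}^{\,p}\,\Qnzero^{p-1}$. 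Combined with the defining formula $\delta_i(Q_{n,s}) = P_{n,i,s}^{\,p} + R_{n,i}^{\,p}Q_{n,s}$, the two surviving terms contribute $\mp R_{n,i}^{\,p}\Qnzero^{p-1}F^p Q_{n,s}$, which cancel exactly, leaving only $\Qnzero^{p-1}F^p P_{n,i,s}^{\,p}$. Multiplying by $(-1)^n\Qnzero$ yields the claimed identity $(-1)^n \Qnzero^p (F P_{n,i,s})^p$.

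The kernel assertion is then immediate once one recognises the right-hand side as a $p$-th power in $D_n$. Since Frobenius is the identity on $\F_p$, one has $(-1)^n = ((-1)^n)^p$, so
\[
(-1)^n \Qnzero^p (F P_{n,i,s})^p \;=\; \big((-1)^n \Qnzero F P_{n,i,s}\big)^p,
\]
and the final clause of Proposition~\ref{prop:normalized-derivation} then places this element in $\ker(\StDeltai)$.

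The only delicate point in the argument is ensuring the cancellation between $\delta_i(\Qnzero^{p-1})$ and the $R_{n,i}^{\,p}Q_{n,s}$-term of $\delta_i(Q_{n,s})$; this is engineered precisely by $P_{n,i,0}=0$, which forces $\delta_i(\Qnzero)$ to be proportional to $\Qnzero$ itself. Without this vanishing one would pick up a stray $P_{n,i,0}^{\,p}$-term and the cancellation would fail. Beyond this single bookkeeping observation the proof is essentially forced by the derivation framework, so no substantial obstacle is expected.
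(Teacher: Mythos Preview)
Your proof is correct and follows essentially the same route as the paper: both arguments expand via the Leibniz/chain rule, kill the $F^{p}$ term, and observe the exact cancellation of the two $R_{n,i}^{\,p}Q_{n,s}$ contributions driven by $P_{n,i,0}=0$. The only cosmetic difference is that you compute with $\delta_i$ first and then multiply by $(-1)^n\Qnzero$, whereas the paper stays with $\StDeltai$ throughout via Remark~\ref{rem:chain-rule}; your explicit rewriting of the right-hand side as $\big((-1)^n\Qnzero F P_{n,i,s}\big)^p$ is a nice touch that the paper leaves implicit.
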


\begin{remark}
Proposition~\ref{prop:C-kernel} is a direct generalization of a result of Sum \SumCite. Indeed, when $F(\underline{Q})=1$, it recovers Corollary~2.8 of \cite{Sum}.
\end{remark}

\begin{corollary}[Global properties of the action]\label{cor:global-props}
The image of \(\StDeltai\) is contained in the principal ideal generated by \(Q_{n,0}\):
\[
\im(\StDeltai)\subseteq (Q_{n,0}).
\]
Moreover, \(\StDeltai\) annihilates every Frobenius power:
\[
\StDeltai(H^p)=0
\qquad (H\in D_n).
\]
Equivalently, if \(D_n^{(p)}\) denotes the Frobenius image of \(D_n\), then
\[
\StDeltai(D_n^{(p)})=0.
\]
In particular,
\[
\StDeltai(Q_{n,0}^{\,p}G^p)=0
\qquad (G\in D_n),
\]
so \(\StDeltai\) vanishes on the \(D_n^{(p)}\)-submodule
\[
Q_{n,0}^{\,p}D_n^{(p)}\subset D_n.
\]
The latter submodule is not asserted to be an ideal of \(D_n\).
\end{corollary}

\subsection{Normalized ratios}

\begin{proposition}[Invariant ratios]\label{prop:invariant-ratios}
Work in the localization $D_n[\Qnzero^{-1}]$. Let
\[
A_s:=P_{n,i,s}^{\,p},
\qquad
B:=R_{n,i}^{\,p}.
\]
Assume $B\ne0$, and work further in the localization
\[
D_n[\Qnzero^{-1},B^{-1}].
\]
Then for each $0\le s<n$ the element
\[
I_s:=\frac{Q_{n,s}+B^{-1}A_s}{\Qnzero}
\]
belongs to the kernel of the normalized derivation, that is,
\[
\delta_i(I_s)=0.
\]
Moreover,
\[
I_0=1.
\]
Thus the nontrivial invariant ratios in this family are
\[
I_1,\dots,I_{n-1}.
\]
Consequently, every $\F_p$-polynomial in $I_1,\dots,I_{n-1}$ is annihilated by $\delta_i$.
More generally, every rational function over $\F_p$ in $I_1,\dots,I_{n-1}$ whose evaluation is defined in the fraction field of the chosen localization is annihilated by $\delta_i$ as well, and hence also by $\StDeltai$.
\end{proposition}

\begin{theorem}[Action on normalized ratios]\label{thm:ratio-action}
Work in $D_n[\Qnzero^{-1}]$ and define
\[
R_s:=\frac{Q_{n,s}}{Q_{n,0}}
\qquad (0\le s\le n-1),
\]
with $R_0=1$ as in Remark~\ref{rem:R0-one}. Let
\[
A_s:=P_{n,i,s}^{\,p},
\qquad
K:=\Frac\big(D_n[\Qnzero^{-1}]\big).
\]
Then for every $0\le s<n$ one has
\begin{equation}\label{eq:C1}
\StDeltai(R_s)=(-1)^nA_s.
\end{equation}
In particular, for every polynomial
\[
\Phi\in \F_p[T_1,\dots,T_{n-1}],
\]
one has
\begin{equation}\label{eq:C2}
\StDeltai\big(\Phi(R_1,\dots,R_{n-1})\big)
=
(-1)^n\sum_{s=1}^{n-1}
\frac{\partial \Phi}{\partial T_s}(R_1,\dots,R_{n-1})\,A_s
\end{equation}
as an identity in $D_n[\Qnzero^{-1}]$.

More generally, the same formula holds in $K$ for every rational function
\[
\Phi\in \F_p(T_1,\dots,T_{n-1})
\]
whose evaluation at $(R_1,\dots,R_{n-1})$ is defined in $K$.
\end{theorem}

\begin{remark}[Normalized ratios and invariant ratios]\label{rem:ratio-reading}
Formula \eqref{eq:C2} is first an identity in the localization $D_n[\Qnzero^{-1}]$ for polynomial expressions in the independent ratios $R_1,\dots,R_{n-1}$. Its rational-function version is understood in the fraction field $\Frac\big(D_n[\Qnzero^{-1}]\big)$.
It does not assert that all coefficients $A_s$ necessarily belong to the polynomial ring $\F_p[R_1,\dots,R_{n-1}]$. Example~\ref{ex:sum-family} shows that, for $i=n+1$, one can have $A_s=-Q_{n,s-1}^{\,p}$,
which involves $Q_{n,0}$ explicitly. Thus the normalized ratios provide a useful affine-linear description of the action, but one must distinguish carefully between the ambient localization, its fraction field, and the smaller ratio subring.

When $B\ne0$ and after localizing by inverting $B$, the invariant ratios of Proposition~\ref{prop:invariant-ratios} are obtained from the normalized ratios by the correction term
\[
I_s=R_s+\frac{B^{-1}A_s}{Q_{n,0}}.
\]
For $s=0$, since $R_0=1$ and $A_0=0$, this gives
\[
I_0=1.
\]
Thus the genuinely useful invariant-ratio variables are $I_1,\dots,I_{n-1}$.

Since
\[
\delta_i(R_s)=\frac{A_s}{Q_{n,0}},
\qquad
\delta_i\!\left(\frac{B^{-1}A_s}{Q_{n,0}}\right)=-\frac{A_s}{Q_{n,0}},
\]
we recover directly that
\[
\delta_i(I_s)=0,
\]
and therefore also
\[
\StDeltai(I_s)=0
\]
in the same localization.
\end{remark}

\subsection{An Euler-type family after affine normalization}

\begin{theorem}[Euler-type family]\label{thm:euler-family}
Let
\[
B:=R_{n,i}^{\,p}.
\]
Assume $B\ne0$, work in the localization $D_n[\Qnzero^{-1},B^{-1}]$, and let $L$ denote its fraction field. Let $I_1,\dots,I_{n-1}$ be the invariant ratios of Proposition~\ref{prop:invariant-ratios}. Then, for every integer $m\in\mathbb Z$ and every rational function
\[
\Phi\in \F_p(T_1,\dots,T_{n-1})
\]
whose evaluation at $(I_1,\dots,I_{n-1})$ is defined in $L$, one has
\begin{equation}\label{eq:euler-delta}
\delta_i\big(\Qnzero^m\Phi(I_1,\dots,I_{n-1})\big)
=
mB\,\Qnzero^m\Phi(I_1,\dots,I_{n-1}),
\end{equation}
and therefore
\begin{equation}\label{eq:euler-st}
\StDeltai\big(\Qnzero^m\Phi(I_1,\dots,I_{n-1})\big)
=
(-1)^n mB\,\Qnzero^{m+1}\Phi(I_1,\dots,I_{n-1}).
\end{equation}
In particular, if $p\mid m$, then
\[
\Qnzero^m\Phi(I_1,\dots,I_{n-1})\in \ker(\delta_i)\cap\ker(\StDeltai).
\]
If $p\nmid m$, then
\[
\Qnzero^m\Phi(I_1,\dots,I_{n-1})\in \im(\delta_i)
\]
in $L$.
\end{theorem}

\begin{remark}
Theorem~\ref{thm:euler-family} shows that, when \(B\ne0\) and after localizing by inverting \(B\), the affine change of variables
\[
U_s:=Q_{n,s}+B^{-1}A_s
\qquad (0\le s\le n-1)
\]
satisfies
\[
\delta_i(U_s)=BU_s,
\]
while the normalized ratios \(I_s=U_s/Q_{n,0}\) are invariant. On the family
\[
Q_{n,0}^m\Phi(I_1,\dots,I_{n-1}),
\]
the normalized derivation therefore acts as an Euler operator with eigenvalue \(mB\).
\end{remark}

\begin{theorem}[Finite-field weight decomposition]\label{thm:weight-decomposition}
Assume
\[
B=R_{n,i}^{\,p}\ne0
\]
and set
\[
S_i:=D_n[Q_{n,0}^{-1},B^{-1}],
\qquad
\partial_i:=B^{-1}\delta_i.
\]
Then
\[
\partial_i^p=\partial_i
\]
as an \(\F_p\)-linear derivation of \(S_i\). For \(\lambda\in\F_p\), define
\[
S_i^{(\lambda)}
:=
\ker(\partial_i-\lambda\operatorname{id}_{S_i}).
\]
There is a direct-sum decomposition
\begin{equation}\label{eq:weight-decomposition}
S_i=\bigoplus_{\lambda\in\F_p}S_i^{(\lambda)}.
\end{equation}
The projection onto \(S_i^{(\lambda)}\) is the Lagrange polynomial in \(\partial_i\)
\begin{equation}\label{eq:weight-projector}
\pi_\lambda
=
\prod_{\substack{\mu\in\F_p\\ \mu\ne\lambda}}
\frac{\partial_i-\mu\operatorname{id}_{S_i}}{\lambda-\mu}.
\end{equation}
The decomposition is multiplicative:
\begin{equation}\label{eq:weight-product}
S_i^{(\lambda)}S_i^{(\mu)}
\subseteq
S_i^{(\lambda+\mu)}
\qquad (\lambda,\mu\in\F_p),
\end{equation}
where the sum of weights is taken in \(\F_p\). In particular,
\[
Q_{n,0}\in S_i^{(1)},
\qquad
B\in S_i^{(0)},
\qquad
I_s\in S_i^{(0)}
\quad (1\le s<n).
\]

For \(f\in S_i^{(\lambda)}\) and \(m\ge1\), one has
\begin{equation}\label{eq:weight-iterate}
(\StDeltai)^m(f)
=
(-1)^{mn}B^mQ_{n,0}^{\,m}
\left(\prod_{r=0}^{m-1}(\lambda+r)\right)f,
\end{equation}
with every factor \(\lambda+r\) interpreted in \(\F_p\). Consequently, as \(\F_p\)-vector spaces,
\begin{equation}\label{eq:weight-kernel-image-delta}
\ker(\delta_i)=S_i^{(0)},
\qquad
\im(\delta_i)=\bigoplus_{\lambda\in\F_p^\times}S_i^{(\lambda)},
\end{equation}
and
\begin{equation}\label{eq:weight-kernel-image-st}
\ker(\StDeltai)=S_i^{(0)},
\qquad
\im(\StDeltai)
=
\bigoplus_{\lambda\in\F_p\setminus\{1\}}S_i^{(\lambda)}.
\end{equation}
\end{theorem}

\begin{remark}\label{rem:euler-as-weight}
Combining Theorem~\ref{thm:euler-family} and Theorem~\ref{thm:weight-decomposition}, let \(\Phi\) be a polynomial in the invariant ratios, or more generally a rational function whose evaluation \(\Phi(I_1,\dots,I_{n-1})\) belongs to \(S_i\). Then
\[
Q_{n,0}^m\Phi(I_1,\dots,I_{n-1})
\in
S_i^{(\overline m)},
\]
where \(\overline m\) is the class of \(m\) in \(\F_p\). Thus the Euler-type family inside \(S_i\) is generated by the weight-one unit \(Q_{n,0}\) and the weight-zero invariant ratios.
\end{remark}

\section{Proofs of the main results}\label{sec:proofs}

This section proves the structural statements of Section~\ref{sec:main}. The argument begins with the derivation extension and the affine differential system on the Dickson generators. It then resolves the noncommuting product \(Q_{n,0}\delta_i\) through unsigned Stirling numbers, establishes the restricted \(p\)-power relation, and concludes with the ratio, Euler, and weight-space calculations.

\subsection*{Proof of Proposition~\ref{prop:normalized-derivation}}
By Lemma~\ref{lem:primitive-leibniz}, the operation
\[
\StDeltai=\mathrm{St}^{\emptyset,\Delta_i}
\]
acts as an ordinary $\F_p$-linear derivation on $P_n$. Sum's formula shows that $\StDeltai$ stabilizes the Dickson algebra $D_n$, since it gives an explicit element of $D_n$ for each value $\StDeltai(Q_{n,s})$. Therefore $\StDeltai|_{D_n}$ extends uniquely to a derivation on the localization $D_n[\Qnzero^{-1}]$ by the quotient rule, as explained in Remark~\ref{rem:localization-convention}.

For $f,g\in D_n[\Qnzero^{-1}]$, we compute
\[
\delta_i(fg)
=
(-1)^n\Qnzero^{-1}\StDeltai(fg)
=
(-1)^n\Qnzero^{-1}\big(\StDeltai(f)g+f\StDeltai(g)\big).
\]
Hence
\[
\delta_i(fg)
=
\delta_i(f)g+f\delta_i(g),
\]
so $\delta_i$ is a derivation on $D_n[\Qnzero^{-1}]$.

The formula
\[
\delta_i(Q_{n,s})=P_{n,i,s}^{\,p}+R_{n,i}^{\,p}Q_{n,s}
\]
follows immediately from Theorem~\ref{thm:CF}. Finally, in characteristic $p$, every derivation annihilates $p$-th powers. Therefore, for every $F\in D_n[\Qnzero^{-1}]$,
\[
\delta_i(F^p)=pF^{p-1}\delta_i(F)=0.
\]
Since
\[
\StDeltai=(-1)^n\Qnzero\,\delta_i
\]
on the localization, it follows also that
\[
\StDeltai(F^p)=0.
\]
\qed

\subsection*{Proof of Theorem~\ref{thm:iterate-St}}
By Theorem~\ref{thm:CF} and Proposition~\ref{prop:normalized-derivation},
\[
\delta_i(Q_{n,s})=A_s+BQ_{n,s},
\qquad
\delta_i(\Qnzero)=B\Qnzero.
\]
Since $A_s$ and $B$ are $p$-th powers, one has
\[
\delta_i(A_s)=0,
\qquad
\delta_i(B)=0.
\]
We first prove \eqref{eq:B1} by induction on $m$. The case $m=1$ is immediate. Assume
\[
\delta_i^k(Q_{n,s})=B^kQ_{n,s}+B^{k-1}A_s.
\]
Then
\begin{align*}
\delta_i^{k+1}(Q_{n,s})
&=
\delta_i\big(B^kQ_{n,s}+B^{k-1}A_s\big)\\
&=
B^k\delta_i(Q_{n,s})+\delta_i(B^{k-1}A_s)\\
&=
B^k(A_s+BQ_{n,s})+0\\
&=
B^{k+1}Q_{n,s}+B^kA_s.
\end{align*}
This proves \eqref{eq:B1}.

To prove \eqref{eq:B2}, let $M$ be the operator of multiplication by $\Qnzero$. Then
\[
\StDeltai=(-1)^nM\delta_i.
\]
The operators $M$ and $\delta_i$ do not commute, but they satisfy the relation
\[
\delta_iM=M(\delta_i+B).
\]
More generally, one has the following commuting rule.

\begin{lemma}\label{lem:commutation-general}
For every integer \(m\ge1\),
\[
\delta_iM^m=M^m(\delta_i+mB).
\]
\end{lemma}

\begin{proof}
Let \(f\in D_n[Q_{n,0}^{-1}]\). Since \(M^m(f)=Q_{n,0}^mf\) and
\[
\delta_i(Q_{n,0})=BQ_{n,0},
\qquad
\delta_i(B)=0,
\]
the Leibniz rule gives
\[
\delta_i(Q_{n,0}^m)
=
mQ_{n,0}^{m-1}\delta_i(Q_{n,0})
=
mBQ_{n,0}^m.
\]
Consequently,
\begin{align*}
\delta_iM^m(f)
&=
\delta_i(Q_{n,0}^mf)\\
&=
\delta_i(Q_{n,0}^m)f+Q_{n,0}^m\delta_i(f)\\
&=
Q_{n,0}^m\big(\delta_i(f)+mBf\big)\\
&=
M^m(\delta_i+mB)(f).
\end{align*}
Since this identity holds for every \(f\), the asserted operator relation follows.
\end{proof}

We next expand the noncommuting power \((M\delta_i)^m\).

\begin{proposition}\label{prop:stirling-identity}
For every integer \(m\ge1\),
\[
(M\delta_i)^m
=
M^m\sum_{j=1}^{m}
\begin{bmatrix} m \\ j \end{bmatrix}
B^{m-j}\delta_i^j,
\]
where \(\begin{bmatrix}m\\j\end{bmatrix}\) is the unsigned Stirling number of the first kind.
\end{proposition}

\begin{proof}
We use induction on \(m\). For \(m=1\), the identity is
\[
M\delta_i
=
M\begin{bmatrix}1\\1\end{bmatrix}\delta_i,
\]
which holds because \(\begin{bmatrix}1\\1\end{bmatrix}=1\).

Assume that the formula holds for a fixed \(m\ge1\). Then
\begin{align*}
(M\delta_i)^{m+1}
&=
M\delta_i(M\delta_i)^m\\
&=
M\delta_i
\left(
M^m
\sum_{j=1}^{m}
\begin{bmatrix}m\\j\end{bmatrix}
B^{m-j}\delta_i^j
\right)\\
&=
M^{m+1}(\delta_i+mB)
\sum_{j=1}^{m}
\begin{bmatrix}m\\j\end{bmatrix}
B^{m-j}\delta_i^j,
\end{align*}
where Lemma~\ref{lem:commutation-general} was used in the last equality. Since \(\delta_i(B)=0\), the derivation \(\delta_i\) commutes with multiplication by every power of \(B\). Therefore
\begin{align*}
(M\delta_i)^{m+1}
=
M^{m+1}
\left(
\sum_{j=1}^{m}
\begin{bmatrix}m\\j\end{bmatrix}
B^{m-j}\delta_i^{j+1}
+
\sum_{j=1}^{m}
m\begin{bmatrix}m\\j\end{bmatrix}
B^{m+1-j}\delta_i^j
\right).
\end{align*}
In the first sum put \(k=j+1\); in the second put \(k=j\). With the conventions
\[
\begin{bmatrix}m\\0\end{bmatrix}=0,
\qquad
\begin{bmatrix}m\\m+1\end{bmatrix}=0,
\]
the coefficient of \(B^{m+1-k}\delta_i^k\), for \(1\le k\le m+1\), is
\[
\begin{bmatrix}m\\k-1\end{bmatrix}
+
m\begin{bmatrix}m\\k\end{bmatrix}.
\]
The defining recurrence for unsigned Stirling numbers of the first kind gives
\[
\begin{bmatrix}m+1\\k\end{bmatrix}
=
\begin{bmatrix}m\\k-1\end{bmatrix}
+
m\begin{bmatrix}m\\k\end{bmatrix}.
\]
Hence
\[
(M\delta_i)^{m+1}
=
M^{m+1}
\sum_{k=1}^{m+1}
\begin{bmatrix}m+1\\k\end{bmatrix}
B^{m+1-k}\delta_i^k,
\]
which is the required formula for \(m+1\).
\end{proof}

We now apply these results to find $(\StDeltai)^m(Q_{n,s})$. Indeed, we have
\begin{align*}
(\StDeltai)^m(Q_{n,s}) &= (-1)^{mn} (\Qnzero \delta_i)^m (Q_{n,s}) \\
&= (-1)^{mn} \left( \Qnzero^m \sum_{j=1}^{m} \begin{bmatrix} m \\ j \end{bmatrix}  B^{m-j} \delta_i^j \right) (Q_{n,s}) && \text{(by Proposition~\ref{prop:stirling-identity})} \\
&= (-1)^{mn} \Qnzero^m \sum_{j=1}^{m} \begin{bmatrix} m \\ j \end{bmatrix} B^{m-j} (\delta_i^j(Q_{n,s})).
\end{align*}
We substitute the result for $\delta_i^j(Q_{n,s})$ from \eqref{eq:B1} and obtain:
\begin{align*}
(\StDeltai)^m(Q_{n,s}) &= (-1)^{mn} \Qnzero^m \sum_{j=1}^{m} \begin{bmatrix} m \\ j \end{bmatrix} (B^m Q_{n,s} + B^{m-1} A_s) \\
&= (-1)^{mn} \Qnzero^m \sum_{j=0}^{m} \begin{bmatrix} m \\ j \end{bmatrix} (B^m Q_{n,s} + B^{m-1} A_s)&&\text{(since $\begin{bmatrix} m \\ 0 \end{bmatrix} = 0$)} \\
&= (-1)^{mn} m! \Qnzero^m (B^m Q_{n,s} + B^{m-1} A_s)
&& \text{(in $\F_p$, since $\sum_{j=0}^m \begin{bmatrix} m \\ j \end{bmatrix} = m!$)}.
\end{align*}
This completes the proof of \eqref{eq:B2}.
\qed

\begin{remark}
While formula \eqref{eq:B2} can also be established by a direct, albeit \textit{computationally complicated}, induction on $m$, the proof presented above is chosen for conceptual clarity. Our operator-theoretic approach is designed to elucidate the combinatorial structure underlying the iteration, explaining precisely how the non-commutativity of operators gives rise to the factorial term $m!$ via Stirling numbers.
\end{remark}

\subsection*{Proof of Proposition~\ref{prop:delta-p-relation}}

The \(p\)-fold iterate of any derivation in characteristic \(p\) is again a derivation. Indeed, for \(f,g\in D_n\), the generalized Leibniz formula gives
\[
\delta_i^p(fg)
=
\sum_{k=0}^{p}\binom pk
\delta_i^k(f)\delta_i^{p-k}(g).
\]
Every intermediate binomial coefficient \(\binom pk\), \(1\le k\le p-1\), vanishes in \(\F_p\). Hence
\[
\delta_i^p(fg)
=
\delta_i^p(f)g+f\delta_i^p(g).
\]
The operator \(B^{p-1}\delta_i\) is also a derivation, because multiplication by the central element \(B^{p-1}\) preserves the Leibniz rule.

For each Dickson generator \(Q_{n,s}\), formula \eqref{eq:B1} with \(m=p\) gives
\[
\delta_i^p(Q_{n,s})
=
B^pQ_{n,s}+B^{p-1}A_s
=
B^{p-1}(BQ_{n,s}+A_s)
=
B^{p-1}\delta_i(Q_{n,s}).
\]
Thus the two derivations \(\delta_i^p\) and \(B^{p-1}\delta_i\) agree on the algebraically independent generators
\[
Q_{n,0},Q_{n,1},\dots,Q_{n,n-1}
\]
of \(D_n\). They therefore agree on every element of \(D_n\), proving
\[
\delta_i^p=B^{p-1}\delta_i
\qquad\text{on }D_n.
\]
Both sides extend uniquely as derivations to \(D_n[Q_{n,0}^{-1}]\), so the same identity holds on the localization.

Since \(\delta_i(B)=0\), the derivation \(\delta_i\) commutes with multiplication by \(B^{p-1}\). For \(m\ge1\), composing \eqref{eq:delta-p-relation} with \(\delta_i^{m-1}\) therefore yields
\[
\delta_i^{m+p-1}
=
\delta_i^{m-1}\delta_i^p
=
\delta_i^{m-1}B^{p-1}\delta_i
=
B^{p-1}\delta_i^m.
\]
The final two assertions follow by substituting \(B=0\) and \(B=1\), respectively.
\qed

\subsection*{Proof of Proposition~\ref{prop:C-kernel}}
If $s=0$, then
\[
\Qnzero^{p-1}F(\underline Q)^pQ_{n,0}=(\Qnzero F(\underline Q))^p,
\]
so the left-hand side is zero by Proposition~\ref{prop:normalized-derivation}; the right-hand side is also zero because $P_{n,i,0}=0$. We may therefore assume $s>0$.

Apply Remark~\ref{rem:chain-rule} to
\[
f(X_0,\dots,X_{n-1})=X_0^{p-1}\big(F(X_0,\dots,X_{n-1})\big)^pX_s.
\]
Because $\partial(F^p)/\partial X_t=0$ in characteristic $p$, only the derivatives with respect to the distinct variables $X_0$ and $X_s$ survive. Hence
\begin{align*}
\StDeltai\big(\Qnzero^{p-1}F(\underline{Q})^pQ_{n,s}\big)
&=
(p-1)\Qnzero^{p-2}F(\underline{Q})^pQ_{n,s}\,\StDeltai(\Qnzero)\\
&\qquad +\Qnzero^{p-1}F(\underline{Q})^p\,\StDeltai(Q_{n,s}).
\end{align*}
Using Theorem~\ref{thm:CF} and the congruence $(p-1)\equiv -1\pmod p$, we get
\begin{align*}
\StDeltai\big(\Qnzero^{p-1}F^pQ_{n,s}\big)
&=
(-1)^n\Big[-\Qnzero^{p-2}F^pQ_{n,s}(B\Qnzero^2)
+\Qnzero^{p-1}F^p\Qnzero(A_s+BQ_{n,s})\Big]\\
&=
(-1)^n\Qnzero^pF^pA_s\\
&=
(-1)^n\Qnzero^p(FP_{n,i,s})^p.
\end{align*}
Finally, Proposition~\ref{prop:normalized-derivation} implies that $\StDeltai$ annihilates all $p$-th powers, so the resulting element belongs to $\ker(\StDeltai)$.
\qed

\subsection*{Proof of Corollary~\ref{cor:global-props}}

By Remark~\ref{rem:delta-restriction},
\[
\delta_i(D_n)\subseteq D_n,
\]
and on \(D_n\) one has
\[
\StDeltai=(-1)^nQ_{n,0}\delta_i.
\]
It follows immediately that every value of \(\StDeltai\) is divisible by \(Q_{n,0}\), so
\[
\im(\StDeltai)\subseteq(Q_{n,0}).
\]

Let \(H\in D_n\). Since \(\delta_i\) is a derivation and the characteristic is \(p\),
\[
\delta_i(H^p)=pH^{p-1}\delta_i(H)=0.
\]
Therefore
\[
\StDeltai(H^p)
=
(-1)^nQ_{n,0}\delta_i(H^p)
=
0.
\]
This proves
\[
\StDeltai(D_n^{(p)})=0.
\]
Finally, for \(G\in D_n\),
\[
Q_{n,0}^{\,p}G^p=(Q_{n,0}G)^p\in D_n^{(p)},
\]
and hence \(\StDeltai\) also vanishes on \(Q_{n,0}^{\,p}D_n^{(p)}\).
\qed

\subsection*{Proof of Proposition~\ref{prop:invariant-ratios}}
Since $B\ne0$, the localization $D_n[\Qnzero^{-1},B^{-1}]$ is defined. Set
\[
U_s:=Q_{n,s}+B^{-1}A_s.
\]
Since $A_s$ and $B$ are $p$-th powers, $\delta_i(A_s)=\delta_i(B)=0$, and therefore $\delta_i(B^{-1}A_s)=0$. Using Proposition~\ref{prop:normalized-derivation},
\[
\delta_i(U_s)=\delta_i(Q_{n,s})=A_s+BQ_{n,s}=B(Q_{n,s}+B^{-1}A_s)=BU_s.
\]
Now apply the quotient rule:
\[
\delta_i(I_s)
=
\delta_i\left(\frac{U_s}{\Qnzero}\right)
=
\frac{\delta_i(U_s)\Qnzero-U_s\delta_i(\Qnzero)}{\Qnzero^2}
=
\frac{BU_s\Qnzero-U_sB\Qnzero}{\Qnzero^2}=0.
\]

For $s=0$, we have $A_0=P_{n,i,0}^{\,p}=0$. Hence
\[
I_0
=
\frac{Q_{n,0}+B^{-1}A_0}{\Qnzero}
=
\frac{\Qnzero}{\Qnzero}
=
1.
\]
Thus only $I_1,\dots,I_{n-1}$ give nontrivial invariant ratios.

The final claim follows from the fact that $\delta_i$ is a derivation, hence extends uniquely
to the fraction field of the chosen localization, where it annihilates every rational function
in $I_1,\dots,I_{n-1}$ by repeated use of the quotient rule.
Since
\[
\StDeltai=(-1)^n\Qnzero\,\delta_i
\]
on that localization, the same elements are also annihilated by $\StDeltai$.
\qed

\subsection*{Proof of Theorem~\ref{thm:ratio-action}}
From Proposition~\ref{prop:normalized-derivation} we have
\[
\delta_i(Q_{n,s})=A_s+BQ_{n,s},
\qquad
\delta_i(\Qnzero)=B\Qnzero.
\]
Therefore
\begin{align*}
\delta_i(R_s)
&=
\delta_i\left(\frac{Q_{n,s}}{Q_{n,0}}\right)\\
&=
\frac{(A_s+BQ_{n,s})Q_{n,0}-Q_{n,s}(BQ_{n,0})}{Q_{n,0}^2}\\
&=
\frac{A_s}{Q_{n,0}}.
\end{align*}
Multiplying by $(-1)^nQ_{n,0}$ yields
\[
\StDeltai(R_s)=(-1)^nA_s,
\]
which proves \eqref{eq:C1}.

Now let
\[
K:=\Frac\big(D_n[\Qnzero^{-1}]\big).
\]
Since $\delta_i$ is a derivation on $D_n[\Qnzero^{-1}]$, it extends uniquely to a derivation on $K$.

For a polynomial $\Phi\in \F_p[T_1,\dots,T_{n-1}]$, the ordinary chain rule gives
\[
\delta_i\big(\Phi(R_1,\dots,R_{n-1})\big)
=
\sum_{s=1}^{n-1}
\frac{\partial \Phi}{\partial T_s}(R_1,\dots,R_{n-1})\,\delta_i(R_s).
\]
Substituting $\delta_i(R_s)=A_s/Q_{n,0}$ and multiplying by $(-1)^nQ_{n,0}$, we obtain \eqref{eq:C2}.

For a rational function $\Phi\in \F_p(T_1,\dots,T_{n-1})$ whose evaluation at $(R_1,\dots,R_{n-1})$ is defined in $K$, write
\[
\Phi=\frac{f}{g}
\]
with $f,g\in \F_p[T_1,\dots,T_{n-1}]$ and $g(R_1,\dots,R_{n-1})\neq 0$ in $K$.
Applying the quotient rule in $K$ to
\[
\frac{f(R_1,\dots,R_{n-1})}{g(R_1,\dots,R_{n-1})}
\]
shows that the same formula holds for $\Phi$.
\qed

\subsection*{Proof of Theorem~\ref{thm:euler-family}}

Let $L$ be the fraction field of $D_n[\Qnzero^{-1},B^{-1}]$.
By Proposition~\ref{prop:invariant-ratios}, one has
\[
\delta_i(I_s)=0
\qquad (1\le s\le n-1).
\]
Hence, for every rational function
\[
\Phi\in \F_p(T_1,\dots,T_{n-1})
\]
whose evaluation at $(I_1,\dots,I_{n-1})$ is defined in $L$, repeated use of the quotient rule gives
\[
\delta_i\big(\Phi(I_1,\dots,I_{n-1})\big)=0.
\]
On the other hand, Proposition~\ref{prop:normalized-derivation} gives
\[
\delta_i(\Qnzero)=B\Qnzero,
\qquad
\delta_i(B)=0.
\]
Therefore, for $m\ge 0$, a straightforward induction yields
\[
\delta_i(\Qnzero^m)=mB\Qnzero^m.
\]
If $m<0$, write $m=-r$ with $r>0$. Then the quotient rule gives
\[
0=\delta_i(1)=\delta_i(\Qnzero^r\Qnzero^{-r})
=rB\Qnzero^r\Qnzero^{-r}+\Qnzero^r\delta_i(\Qnzero^{-r}),
\]
so
\[
\delta_i(\Qnzero^{-r})=-rB\Qnzero^{-r}=mB\Qnzero^m.
\]
Thus the formula
\[
\delta_i(\Qnzero^m)=mB\Qnzero^m
\]
holds for every integer $m\in\mathbb Z$.

Applying the Leibniz rule, we obtain
\[
\delta_i\big(\Qnzero^m\Phi(I_1,\dots,I_{n-1})\big)
=
\delta_i(\Qnzero^m)\Phi(I_1,\dots,I_{n-1})
+
\Qnzero^m\delta_i\big(\Phi(I_1,\dots,I_{n-1})\big),
\]
and hence
\[
\delta_i\big(\Qnzero^m\Phi(I_1,\dots,I_{n-1})\big)
=
mB\,\Qnzero^m\Phi(I_1,\dots,I_{n-1}),
\]
which is \eqref{eq:euler-delta}. Multiplying by $(-1)^n\Qnzero$ gives \eqref{eq:euler-st}.

If $p\mid m$, then the scalar $m$ vanishes in $\F_p$, so both formulas show that
\[
\Qnzero^m\Phi(I_1,\dots,I_{n-1})\in \ker(\delta_i)\cap\ker(\StDeltai).
\]
If $p\nmid m$, then $m\in\F_p^{\times}$ and $B$ is invertible in $L$, so $mB$ is invertible in $L$. Since $\delta_i(B)=0$, we have
\[
\delta_i\big((mB)^{-1}\Qnzero^m\Phi(I_1,\dots,I_{n-1})\big)
=
\Qnzero^m\Phi(I_1,\dots,I_{n-1}),
\]
which proves the image statement.
\qed

\subsection*{Proof of Theorem~\ref{thm:weight-decomposition}}

Because \(B\) is a \(p\)-th power, Proposition~\ref{prop:normalized-derivation} gives
\[
\delta_i(B)=0.
\]
It follows that \(\delta_i(B^{-1})=0\) in \(S_i\), and hence multiplication by \(B^{-1}\) commutes with \(\delta_i\). Therefore
\[
\partial_i^p
=
(B^{-1}\delta_i)^p
=
B^{-p}\delta_i^p.
\]
Using Proposition~\ref{prop:delta-p-relation}, we obtain
\[
\partial_i^p
=
B^{-p}B^{p-1}\delta_i
=
B^{-1}\delta_i
=
\partial_i.
\]

Consider \(S_i\) as an \(\F_p\)-vector space and set
\[
h(T):=T^p-T.
\]
The identity \(\partial_i^p=\partial_i\) says that
\[
h(\partial_i)=0.
\]
Over \(\F_p\),
\[
h(T)=\prod_{\lambda\in\F_p}(T-\lambda).
\]
Its roots are distinct, because \(h'(T)=pT^{p-1}-1=-1\). For each \(\lambda\in\F_p\), define
\[
\ell_\lambda(T)
:=
\prod_{\substack{\mu\in\F_p\\ \mu\ne\lambda}}
\frac{T-\mu}{\lambda-\mu}.
\]
Every denominator \(\lambda-\mu\) is a nonzero element of \(\F_p\), so \(\ell_\lambda(T)\in\F_p[T]\). The Lagrange interpolation identities give, modulo \(h(T)\),
\[
\sum_{\lambda\in\F_p}\ell_\lambda(T)=1,
\]
\[
T\ell_\lambda(T)=\lambda\ell_\lambda(T),
\]
and
\[
\ell_\lambda(T)\ell_\mu(T)
=
\begin{cases}
\ell_\lambda(T),&\lambda=\mu,\\
0,&\lambda\ne\mu.
\end{cases}
\]
Applying these polynomial identities to \(\partial_i\) is legitimate because \(h(\partial_i)=0\). Thus the endomorphisms
\[
\pi_\lambda:=\ell_\lambda(\partial_i)
\]
satisfy
\[
\sum_{\lambda\in\F_p}\pi_\lambda=\operatorname{id}_{S_i},
\qquad
\partial_i\pi_\lambda=\lambda\pi_\lambda,
\]
and
\[
\pi_\lambda\pi_\mu
=
\begin{cases}
\pi_\lambda,&\lambda=\mu,\\
0,&\lambda\ne\mu.
\end{cases}
\]
For every \(f\in S_i\), we therefore have
\[
f=\sum_{\lambda\in\F_p}\pi_\lambda(f),
\qquad
\pi_\lambda(f)\in S_i^{(\lambda)}.
\]
The orthogonality of the projectors makes this sum direct, proving
\[
S_i=\bigoplus_{\lambda\in\F_p}S_i^{(\lambda)}
\]
and the projector formula \eqref{eq:weight-projector}.

Let \(f\in S_i^{(\lambda)}\) and \(g\in S_i^{(\mu)}\). Since \(\partial_i\) is a derivation,
\[
\partial_i(fg)
=
\partial_i(f)g+f\partial_i(g)
=
\lambda fg+\mu fg
=
(\lambda+\mu)fg.
\]
Hence
\[
fg\in S_i^{(\lambda+\mu)},
\]
which proves \eqref{eq:weight-product}. Furthermore,
\[
\partial_i(Q_{n,0})
=
B^{-1}\delta_i(Q_{n,0})
=
B^{-1}BQ_{n,0}
=
Q_{n,0},
\]
so \(Q_{n,0}\in S_i^{(1)}\). Also,
\[
\partial_i(B)=0
\]
and Proposition~\ref{prop:invariant-ratios} gives
\[
\partial_i(I_s)=B^{-1}\delta_i(I_s)=0.
\]
Thus \(B\) and every \(I_s\) have weight \(0\).

We next prove \eqref{eq:weight-iterate}. Put
\[
D:=\StDeltai=(-1)^nBQ_{n,0}\partial_i.
\]
For \(f\in S_i^{(\lambda)}\),
\[
D(f)=(-1)^n\lambda BQ_{n,0}f.
\]
Because \(B\) has weight \(0\) and \(Q_{n,0}\) has weight \(1\), the element
\[
B^mQ_{n,0}^{\,m}f
\]
has weight \(\lambda+m\). We argue by induction on \(m\). The case \(m=1\) is the preceding formula. Suppose
\[
D^m(f)
=
(-1)^{mn}B^mQ_{n,0}^{\,m}
\left(\prod_{r=0}^{m-1}(\lambda+r)\right)f.
\]
Applying \(D\) and using the weight \(\lambda+m\) of \(B^mQ_{n,0}^{\,m}f\), we obtain
\begin{align*}
D^{m+1}(f)
&=
(-1)^{mn}
\left(\prod_{r=0}^{m-1}(\lambda+r)\right)
D\big(B^mQ_{n,0}^{\,m}f\big)\\
&=
(-1)^{mn}
\left(\prod_{r=0}^{m-1}(\lambda+r)\right)
(-1)^n(\lambda+m)B^{m+1}Q_{n,0}^{\,m+1}f\\
&=
(-1)^{(m+1)n}B^{m+1}Q_{n,0}^{\,m+1}
\left(\prod_{r=0}^{m}(\lambda+r)\right)f.
\end{align*}
This proves \eqref{eq:weight-iterate}. In particular, when \(m=p\), the factors
\[
\lambda,\lambda+1,\dots,\lambda+p-1
\]
run through all elements of \(\F_p\), so their product is zero.

It remains to determine the kernels and images. On \(S_i^{(\lambda)}\),
\[
\delta_i=B\partial_i
\]
acts as multiplication by \(\lambda B\). Since \(B\) is a unit in \(S_i\), this action is zero precisely for \(\lambda=0\), and it is an automorphism of \(S_i^{(\lambda)}\) for every \(\lambda\ne0\). Therefore
\[
\ker(\delta_i)=S_i^{(0)},
\qquad
\im(\delta_i)=\bigoplus_{\lambda\in\F_p^\times}S_i^{(\lambda)}.
\]

Similarly,
\[
\StDeltai=(-1)^nBQ_{n,0}\partial_i.
\]
Both \(B\) and \(Q_{n,0}\) are units in \(S_i\). The operation \(\StDeltai\) vanishes on \(S_i^{(0)}\). For \(\lambda\ne0\), it maps \(S_i^{(\lambda)}\) to \(S_i^{(\lambda+1)}\) by multiplication by the unit
\[
(-1)^n\lambda BQ_{n,0}.
\]
This map is bijective. Indeed, the quotient rule gives
\[
\partial_i\big((BQ_{n,0})^{-1}\big)
=
-(BQ_{n,0})^{-2}\partial_i(BQ_{n,0})
=
-(BQ_{n,0})^{-1},
\]
because \(B\) has weight \(0\) and \(Q_{n,0}\) has weight \(1\). Thus \((BQ_{n,0})^{-1}\) has weight \(-1\). If \(g\in S_i^{(\lambda+1)}\), then
\[
f:=\big((-1)^n\lambda BQ_{n,0}\big)^{-1}g
\]
has weight \(-1+(\lambda+1)=\lambda\), and direct substitution gives \(\StDeltai(f)=g\). Hence the image consists exactly of the weights \(\lambda+1\) with \(\lambda\ne0\), namely all weights except \(1\). Thus
\[
\ker(\StDeltai)=S_i^{(0)},
\qquad
\im(\StDeltai)
=
\bigoplus_{\lambda\in\F_p\setminus\{1\}}S_i^{(\lambda)}.
\]
\qed

\section{Recovery and strengthening of previously known formulas}\label{sec:explicit}

The purpose of this section is to show that the normalized-derivation formalism gives a unified conceptual framework and recovers several previously known first-order formulas and upgrades them to closed formulas for all higher iterates. We do this in two ranges where explicit formulas are available in the literature: first, the classical range $1\le i\le n$, governed by the classical formula (see \cite{SmithSwitzer,WilkersonPrimer}; recorded by Sum in \cite[Proposition~2.3]{Sum}); and second, the low extra range $i=n+1,n+2$, governed by Sum's formulas recorded in \cite[Corollary~2.6]{Sum}; see also \cite[Remark~3.10]{Sum2007}.

Throughout this section we adopt the convention
\[
Q_{n,t}:=0 \qquad \text{for } t<0.
\]

\subsection{The classical range \texorpdfstring{$1\le i\le n$}{1<=i<=n}}

\begin{proposition}[Classical range]\label{prop:classical-range}
Let \(1\le i<n\). Then
\[
\delta_i(Q_{n,s})
=
\begin{cases}
(-1)^{n+i-1}, & s=i,\\
0, & s\neq i,
\end{cases}
\]
and, for every integer \(m\ge1\),
\[
\delta_i^{\,m}(Q_{n,s})
=
\begin{cases}
(-1)^{n+i-1}, & m=1 \text{ and } s=i,\\
0, & \text{otherwise}.
\end{cases}
\]
Consequently,
\[
(\mathrm{St}^{\Delta_i})^{\,m}(Q_{n,s})
=
\begin{cases}
(-1)^{i-1}Q_{n,0}, & m=1 \text{ and } s=i,\\
0, & \text{otherwise}.
\end{cases}
\]

For \(i=n\), one has
\[
\delta_n(Q_{n,s})=Q_{n,s}
\qquad (0\le s<n),
\]
hence
\[
\delta_n^{\,m}(Q_{n,s})=Q_{n,s}
\qquad (m\ge1),
\]
and
\[
(\mathrm{St}^{\Delta_n})^{\,m}(Q_{n,s})
=
(-1)^{mn}m!\,Q_{n,0}^{\,m}Q_{n,s}
\qquad (m\ge1).
\]
In particular,
\[
(\mathrm{St}^{\Delta_n})^{\,m}(Q_{n,s})=0
\qquad (m\ge p).
\]
For \(m=1\), these identities recover the classical formula of Smith--Switzer and Wilkerson; see \cite{SmithSwitzer,WilkersonPrimer} and \cite[Proposition~2.3]{Sum}.
\end{proposition}

\begin{proof}
By Theorem~\ref{thm:CF}, for every $i\ge1$ and every $s$ one has
\[
\mathrm{St}^{\Delta_i}(Q_{n,s})
=
(-1)^nQ_{n,0}\big(A_s+BQ_{n,s}\big),
\]
where
\[
A_s=P_{n,i,s}^{\,p},
\qquad
B=R_{n,i}^{\,p}.
\]

\smallskip
\noindent
\textit{First suppose that \(1\le i<n\).}
In this range, by \cite[Proposition~2.3]{Sum}, one has
\[
\mathrm{St}^{\Delta_i}(Q_{n,s})
=
\begin{cases}
(-1)^{i-1}Q_{n,0}, & s=i,\\
0, & s\neq i.
\end{cases}
\]
Taking $s=0$ in the general expression
\[
\mathrm{St}^{\Delta_i}(Q_{n,s})
=
(-1)^nQ_{n,0}\big(A_s+BQ_{n,s}\big)
\]
gives
\[
0
=
\mathrm{St}^{\Delta_i}(Q_{n,0})
=
(-1)^nQ_{n,0}\big(A_0+BQ_{n,0}\big).
\]
Since $A_0=P_{n,i,0}^{\,p}=0$, this becomes
\[
(-1)^nBQ_{n,0}^{\,2}=0.
\]
The Dickson algebra $D_n$ is a domain and $Q_{n,0}\ne0$, hence
\[
B=0.
\]
Therefore the general formula reduces to
\[
\mathrm{St}^{\Delta_i}(Q_{n,s})
=
(-1)^nQ_{n,0}A_s.
\]
Comparing with this classical formula and again using that multiplication by $Q_{n,0}$ is injective in $D_n$, we obtain
\[
A_s=
\begin{cases}
(-1)^{n+i-1}, & s=i,\\
0, & s\neq i.
\end{cases}
\]
Hence
\[
\delta_i(Q_{n,s})=A_s
=
\begin{cases}
(-1)^{n+i-1}, & s=i,\\
0, & s\neq i.
\end{cases}
\]
Since each $A_s$ is a scalar in $\F_p$, applying $\delta_i$ once more gives
\[
\delta_i^{\,m}(Q_{n,s})=0
\qquad \text{for all } m\ge 2.
\]
This proves the asserted formula for $\delta_i^{\,m}(Q_{n,s})$.

Moreover, since $B=0$, Proposition~\ref{prop:normalized-derivation} gives
\[
\delta_i(Q_{n,0})=B\,Q_{n,0}=0,
\]
equivalently,
\[
\mathrm{St}^{\Delta_i}(Q_{n,0})=0.
\]
Therefore
\[
\mathrm{St}^{\Delta_i}(Q_{n,s})
=
(-1)^nQ_{n,0}\delta_i(Q_{n,s})
=
\begin{cases}
(-1)^{i-1}Q_{n,0}, & s=i,\\
0, & s\neq i,
\end{cases}
\]
which is exactly the classical formula in this range. Applying $\mathrm{St}^{\Delta_i}$ once more, we obtain
\[
(\mathrm{St}^{\Delta_i})^2(Q_{n,s})=0
\qquad \text{for all } s,
\]
because after the first application the result is either $0$ or a scalar multiple of $Q_{n,0}$, and $\mathrm{St}^{\Delta_i}(Q_{n,0})=0$. Hence
\[
(\mathrm{St}^{\Delta_i})^{\,m}(Q_{n,s})=0
\qquad \text{for all } m\ge 2.
\]
This proves the assertions for \(1\le i<n\), and the case \(m=1\) recovers the classical formula in \cite[Proposition~2.3]{Sum}.

\smallskip
\noindent
\textit{Now suppose that \(i=n\).}
By \cite[Proposition~2.3]{Sum}, we have
\[
\mathrm{St}^{\Delta_n}(Q_{n,s})=(-1)^nQ_{n,0}Q_{n,s}.
\]
Taking $s=0$ in the general first-order formula gives
\[
(-1)^nQ_{n,0}^{\,2}
=
\mathrm{St}^{\Delta_n}(Q_{n,0})
=
(-1)^nQ_{n,0}\big(A_0+BQ_{n,0}\big).
\]
Since $A_0=P_{n,n,0}^{\,p}=0$, and since $D_n$ is a domain with $Q_{n,0}\ne0$, it follows that
\[
B=1.
\]
For general $s$, comparison with the classical formula gives
\[
(-1)^nQ_{n,0}(A_s+Q_{n,s})
=
(-1)^nQ_{n,0}Q_{n,s}.
\]
Again using injectivity of multiplication by $Q_{n,0}$, we get
\[
A_s=0
\qquad (0\le s<n).
\]
Therefore
\[
\delta_n(Q_{n,s})=A_s+BQ_{n,s}=Q_{n,s},
\]
and inductively
\[
\delta_n^{\,m}(Q_{n,s})=Q_{n,s}
\qquad (m\ge 1).
\]
Substituting $A_s=0$ and $B=1$ into Theorem~\ref{thm:iterate-St}, we get
\[
(\mathrm{St}^{\Delta_n})^{\,m}(Q_{n,s})
=
(-1)^{mn}m!\,Q_{n,0}^{\,m}Q_{n,s}
\qquad \text{for all } m\ge 1.
\]
For $m=1$, this is precisely the classical formula, while for $m\ge 2$ it gives the higher iterates. The vanishing for $m\ge p$ follows from the factor $m!$ in $\F_p$.
\end{proof}

\begin{remark}
For \(1\le i<n\), the only nonzero higher iterate is the first one on the single generator $Q_{n,i}$. The case \(i=n\) is qualitatively different: the normalized derivation acts by the identity on the Dickson generators, so the full higher-iterate pattern is determined entirely by the factorial term appearing in Theorem~\ref{thm:iterate-St}.
\end{remark}

The explicit formulas of Proposition~\ref{prop:classical-range} also lead to exact kernel and image statements.

\begin{theorem}[Exact kernel and image for $1\le i<n$]\label{thm:classical-kernel-image}
Let $1\le i<n$ and set
\[
C_i:=\F_p[\Qnzero,Q_{n,1},\dots,\widehat{Q_{n,i}},\dots,Q_{n,n-1}].
\]
Then
\[
D_n=C_i[Q_{n,i}]=\bigoplus_{r=0}^{p-1} C_i[Q_{n,i}^p]Q_{n,i}^r
\]
as a $C_i[Q_{n,i}^p]$-module, and the normalized derivation satisfies
\[
\ker(\delta_i)=C_i[Q_{n,i}^p],
\qquad
\im(\delta_i)=\bigoplus_{r=0}^{p-2} C_i[Q_{n,i}^p]Q_{n,i}^r.
\]
Consequently,
\[
\ker(\StDeltai)=C_i[Q_{n,i}^p],
\qquad
\im(\StDeltai)=\Qnzero\bigoplus_{r=0}^{p-2} C_i[Q_{n,i}^p]Q_{n,i}^r.
\]
\end{theorem}

\begin{proof}
By Proposition~\ref{prop:classical-range}, one has $\delta_i(D_n)\subseteq D_n.$
Thus, in the sense of Remark~\ref{rem:delta-restriction}, we may view $\delta_i: D_n\to D_n$ as the restricted derivation. Since $D_n=C_i[Q_{n,i}]$ is a polynomial algebra in the single variable $Q_{n,i}$ over $C_i$, every element of $D_n$ can be written uniquely in the form
\[
f=\sum_{r=0}^{p-1} Q_{n,i}^r f_r(Q_{n,i}^p)
\qquad \text{with } f_r(Q_{n,i}^p)\in C_i[Q_{n,i}^p].
\]
This gives the stated direct-sum decomposition.

Let
\[
c:=(-1)^{n+i-1}\in \F_p^{\times}.
\]
By Proposition~\ref{prop:classical-range}, one has
\[
\delta_i(Q_{n,i})=c,
\qquad
\delta_i(Q_{n,s})=0 \quad (s\ne i).
\]
Hence $\delta_i$ annihilates $C_i$ and also annihilates $Q_{n,i}^p$ because $\delta_i$ kills $p$-th powers. Applying the Leibniz rule to the above decomposition of $f$, we obtain
\[
\delta_i(f)=c\sum_{r=1}^{p-1} rQ_{n,i}^{r-1}f_r(Q_{n,i}^p).
\]
Since $c\ne 0$ and each $r\in\{1,\dots,p-1\}$ is nonzero in $\F_p$, it follows that $\delta_i(f)=0$ if and only if
\[
f_r(Q_{n,i}^p)=0
\qquad \text{for all } r\ge 1.
\]
Therefore
\[
\ker(\delta_i)=C_i[Q_{n,i}^p].
\]

Now let
\[
g=\sum_{r=0}^{p-2} Q_{n,i}^r g_r(Q_{n,i}^p)
\qquad \text{with } g_r(Q_{n,i}^p)\in C_i[Q_{n,i}^p].
\]
Define
\[
h:=c^{-1}\sum_{r=0}^{p-2}(r+1)^{-1}Q_{n,i}^{r+1}g_r(Q_{n,i}^p).
\]
Then the same computation shows that
\[
\delta_i(h)=g.
\]
Thus every such $g$ lies in $\im(\delta_i)$, and conversely the displayed formula for $\delta_i(f)$ shows that no term of $Q_{n,i}$-degree $p-1$ can occur in an image. Hence
\[
\im(\delta_i)=\bigoplus_{r=0}^{p-2} C_i[Q_{n,i}^p]Q_{n,i}^r.
\]

Finally, in the range $1\le i<n$ one has $\delta_i(\Qnzero)=0$ by Proposition~\ref{prop:classical-range}, so
\[
\StDeltai=(-1)^n\Qnzero\,\delta_i
\]
on $D_n$. Since $D_n$ is a domain and $\Qnzero\ne 0$, multiplication by $\Qnzero$ is injective. Therefore
\[
\ker(\StDeltai)=\ker(\delta_i)=C_i[Q_{n,i}^p].
\]
Moreover,
\[
\im(\StDeltai)=\Qnzero\im(\delta_i)
=\Qnzero\bigoplus_{r=0}^{p-2} C_i[Q_{n,i}^p]Q_{n,i}^r.
\]
This proves the theorem.
\end{proof}

\begin{theorem}[Generator-degree description for $i=n$]\label{thm:generator-degree-n}
Define an auxiliary grading on $D_n$ by declaring each Dickson generator $Q_{n,s}$ to have degree $1$. For $d\ge 0$, let $D_n^{\langle d\rangle}$ denote the homogeneous piece of auxiliary degree $d$. Then
\[
D_n=\bigoplus_{d\ge 0} D_n^{\langle d\rangle},
\]
and, for every $F\in D_n^{\langle d\rangle}$,
\[
\delta_n(F)=dF.
\]
Consequently,
\[
\ker(\delta_n)=\bigoplus_{\substack{d\ge 0 \\ p\mid d}} D_n^{\langle d\rangle},
\qquad
\im(\delta_n)=\bigoplus_{\substack{d\ge 0 \\ p\nmid d}} D_n^{\langle d\rangle}.
\]
Moreover,
\[
\ker(\mathrm{St}^{\Delta_n})=\bigoplus_{\substack{d\ge 0 \\ p\mid d}} D_n^{\langle d\rangle},
\qquad
\im(\mathrm{St}^{\Delta_n})=\Qnzero\bigoplus_{\substack{d\ge 0 \\ p\nmid d}} D_n^{\langle d\rangle}.
\]
\end{theorem}

\begin{proof}
By Proposition~\ref{prop:classical-range}, one has
\[
\delta_n(Q_{n,s})=Q_{n,s}
\qquad (0\le s<n).
\]
Let
\[
M=Q_{n,0}^{e_0}Q_{n,1}^{e_1}\cdots Q_{n,n-1}^{e_{n-1}}
\]
be a monomial of auxiliary degree
\[
d=e_0+e_1+\cdots+e_{n-1}.
\]
Repeated use of the Leibniz rule gives
\[
\delta_n(M)=(e_0+e_1+\cdots+e_{n-1})M=dM.
\]
By linearity, the same identity holds for every $F\in D_n^{\langle d\rangle}$.

Thus $\delta_n$ acts on each graded piece $D_n^{\langle d\rangle}$ as multiplication by the scalar $d\in\F_p$. If $p\mid d$, then this scalar is $0$, so $D_n^{\langle d\rangle}\subseteq \ker(\delta_n)$. If $p\nmid d$, then the scalar $d$ is invertible in $\F_p$, so $\delta_n$ restricts to an automorphism of $D_n^{\langle d\rangle}$. Therefore
\[
\ker(\delta_n)=\bigoplus_{\substack{d\ge 0 \\ p\mid d}} D_n^{\langle d\rangle},
\qquad
\im(\delta_n)=\bigoplus_{\substack{d\ge 0 \\ p\nmid d}} D_n^{\langle d\rangle}.
\]

Finally,
\[
\mathrm{St}^{\Delta_n}=(-1)^n\Qnzero\,\delta_n
\]
by Proposition~\ref{prop:classical-range}. Since $D_n$ is a domain and $\Qnzero\ne 0$, multiplication by $\Qnzero$ is injective, hence
\[
\ker(\mathrm{St}^{\Delta_n})=\ker(\delta_n)=\bigoplus_{\substack{d\ge 0 \\ p\mid d}} D_n^{\langle d\rangle}.
\]
Moreover,
\[
\im(\mathrm{St}^{\Delta_n})=\Qnzero\im(\delta_n)
=\Qnzero\bigoplus_{\substack{d\ge 0 \\ p\nmid d}} D_n^{\langle d\rangle}.
\]
This proves the theorem.
\end{proof}

\begin{remark}
The grading used in Theorem~\ref{thm:generator-degree-n} is the polynomial grading in the Dickson generators, not the natural internal grading inherited from $P_n$, since the Dickson generators have different internal degrees.
\end{remark}

\subsection{The low extra range \texorpdfstring{$i=n+1,n+2$}{i=n+1,n+2}}

\begin{proposition}[Low extra range]\label{prop:low-extra-range}
The formulas below recover Sum's first-order identities \cite[Corollary~2.6]{Sum}; see also \cite[Remark~3.10]{Sum2007}, and extend them to every higher iterate.

For \(i=n+1\) and \(0\le s<n\),
\[
\delta_{n+1}(Q_{n,s})
=
Q_{n,n-1}^{\,p}Q_{n,s}-Q_{n,s-1}^{\,p}.
\]
For every integer \(m\ge1\),
\begin{equation}\label{eq:nplus1-delta}
\delta_{n+1}^{\,m}(Q_{n,s})
=
Q_{n,n-1}^{\,mp}Q_{n,s}
-
Q_{n,n-1}^{\,(m-1)p}Q_{n,s-1}^{\,p},
\end{equation}
and
\begin{equation}\label{eq:nplus1-St}
(\mathrm{St}^{\Delta_{n+1}})^{\,m}(Q_{n,s})
=
(-1)^{mn}m!\,Q_{n,0}^{\,m}
\left(
Q_{n,n-1}^{\,mp}Q_{n,s}
-
Q_{n,n-1}^{\,(m-1)p}Q_{n,s-1}^{\,p}
\right).
\end{equation}

For \(i=n+2\) and \(0\le s<n\), set
\[
A^{(n+2)}_s
:=
Q_{n,s-2}^{\,p^2}
-
Q_{n,s-1}^{\,p}Q_{n,n-1}^{\,p^2},
\qquad
B^{(n+2)}
:=
Q_{n,n-1}^{\,p^2+p}
-
Q_{n,n-2}^{\,p^2}.
\]
Equivalently,
\[
A^{(n+2)}_s
=
\big(Q_{n,s-2}^{\,p}-Q_{n,s-1}Q_{n,n-1}^{\,p}\big)^p,
\]
and
\[
B^{(n+2)}
=
\big(Q_{n,n-1}^{\,p+1}-Q_{n,n-2}^{\,p}\big)^p.
\]
Then
\[
\delta_{n+2}(Q_{n,s})
=
A^{(n+2)}_s+B^{(n+2)}Q_{n,s},
\]
and, for every integer \(m\ge1\),
\begin{equation}\label{eq:nplus2-delta}
\delta_{n+2}^{\,m}(Q_{n,s})
=
\big(B^{(n+2)}\big)^mQ_{n,s}
+
\big(B^{(n+2)}\big)^{m-1}A^{(n+2)}_s,
\end{equation}
while
\begin{equation}\label{eq:nplus2-St}
(\mathrm{St}^{\Delta_{n+2}})^{\,m}(Q_{n,s})
=
(-1)^{mn}m!\,Q_{n,0}^{\,m}
\left(
\big(B^{(n+2)}\big)^mQ_{n,s}
+
\big(B^{(n+2)}\big)^{m-1}A^{(n+2)}_s
\right).
\end{equation}

For both \(i=n+1\) and \(i=n+2\),
\[
(\mathrm{St}^{\Delta_i})^{\,m}(Q_{n,s})=0
\qquad (m\ge p).
\]
The specialization \(m=1\) is exactly \cite[Corollary~2.6]{Sum}.
\end{proposition}

\begin{proof}
By Theorem~\ref{thm:CF}, for every $i\ge1$ and every $s$ one has
\[
\mathrm{St}^{\Delta_i}(Q_{n,s})
=
(-1)^nQ_{n,0}\big(A_s+BQ_{n,s}\big),
\]
where
\[
A_s=P_{n,i,s}^{\,p},
\qquad
B=R_{n,i}^{\,p}.
\]

\smallskip
\noindent
\textit{First suppose that \(i=n+1\).}
In this case, Corollary~2.6 of Sum \cite{Sum} gives the first-order formula
\[
\mathrm{St}^{\Delta_{n+1}}(Q_{n,s})
=
(-1)^nQ_{n,0}\big(-Q_{n,s-1}^{\,p}+Q_{n,n-1}^{\,p}Q_{n,s}\big).
\]
Thus, in the general first-order expression
\[
\mathrm{St}^{\Delta_{n+1}}(Q_{n,s})
=
(-1)^nQ_{n,0}\big(A_s+BQ_{n,s}\big),
\]
we read off
\[
A_s=-Q_{n,s-1}^{\,p},
\qquad
B=Q_{n,n-1}^{\,p}.
\]
Hence
\[
\delta_{n+1}(Q_{n,s})
=
A_s+BQ_{n,s}
=
Q_{n,n-1}^{\,p}Q_{n,s}-Q_{n,s-1}^{\,p},
\]
which is exactly the $m=1$ formula above, and therefore recovers Sum's formula after multiplying back by $(-1)^nQ_{n,0}$.

Now Theorem~\ref{thm:iterate-St} applies with these values of $A_s$ and $B$, and gives
\[
\delta_{n+1}^{\,m}(Q_{n,s})
=
B^mQ_{n,s}+B^{m-1}A_s
=
Q_{n,n-1}^{\,mp}Q_{n,s}
-
Q_{n,n-1}^{\,(m-1)p}Q_{n,s-1}^{\,p},
\]
which is \eqref{eq:nplus1-delta}. Substituting the same values into the second formula of Theorem~\ref{thm:iterate-St}, we obtain
\[
(\mathrm{St}^{\Delta_{n+1}})^{\,m}(Q_{n,s})
=
(-1)^{mn}m!\,Q_{n,0}^{\,m}
\left(
Q_{n,n-1}^{\,mp}Q_{n,s}
-
Q_{n,n-1}^{\,(m-1)p}Q_{n,s-1}^{\,p}
\right),
\]
which is \eqref{eq:nplus1-St}.

\smallskip
\noindent
\textit{Now suppose that \(i=n+2\).}
Again, Corollary~2.6 of Sum \cite{Sum} gives
\[
\mathrm{St}^{\Delta_{n+2}}(Q_{n,s})
=
(-1)^nQ_{n,0}
\Big(
Q_{n,s-2}^{\,p^2}
-
Q_{n,s-1}^{\,p}Q_{n,n-1}^{\,p^2}
+
\big(Q_{n,n-1}^{\,p^2+p}-Q_{n,n-2}^{\,p^2}\big)Q_{n,s}
\Big).
\]
Thus, in the general first-order expression
\[
\mathrm{St}^{\Delta_{n+2}}(Q_{n,s})
=
(-1)^nQ_{n,0}\big(A_s+BQ_{n,s}\big),
\]
we read off
\[
A_s=A^{(n+2)}_s
=
Q_{n,s-2}^{\,p^2}
-
Q_{n,s-1}^{\,p}Q_{n,n-1}^{\,p^2},
\]
and
\[
B=B^{(n+2)}
=
Q_{n,n-1}^{\,p^2+p}
-
Q_{n,n-2}^{\,p^2}.
\]
Therefore
\[
\delta_{n+2}(Q_{n,s})
=
A^{(n+2)}_s+B^{(n+2)}Q_{n,s},
\]
which is precisely the $m=1$ formula above, and hence recovers Sum's formula after multiplying by $(-1)^nQ_{n,0}$.

Applying Theorem~\ref{thm:iterate-St} with these values of $A_s$ and $B$, we get
\[
\delta_{n+2}^{\,m}(Q_{n,s})
=
\big(B^{(n+2)}\big)^mQ_{n,s}
+
\big(B^{(n+2)}\big)^{m-1}A^{(n+2)}_s,
\]
which is \eqref{eq:nplus2-delta}, and
\[
(\mathrm{St}^{\Delta_{n+2}})^{\,m}(Q_{n,s})
=
(-1)^{mn}m!\,Q_{n,0}^{\,m}
\left(
\big(B^{(n+2)}\big)^mQ_{n,s}
+
\big(B^{(n+2)}\big)^{m-1}A^{(n+2)}_s
\right),
\]
which is \eqref{eq:nplus2-St}.

Finally, in both cases the vanishing for $m\ge p$ follows immediately from the factor $m!$ in the formulas for
\[
(\mathrm{St}^{\Delta_i})^{\,m}(Q_{n,s}).
\]
Thus the case $m=1$ recovers \cite[Corollary~2.6]{Sum}, while the formulas for $m\ge2$ provide the higher iterates.
\end{proof}

\section{Koszul-type complexes and structural comparisons with Margolis homology}\label{sec:koszul}

The purpose of this section is twofold. First, we recall the structural form of Tuan's computation \cite{Tuan}, which is genuinely a Margolis-homology calculation using the usual Milnor primitives $Q_j$. Second, we explain a more modest Koszul construction naturally attached to the normalized-ratio coefficients
\[
\StDeltai(R_s)=(-1)^nA_s.
\]
This construction is useful as an organizing device, but it should not be read as a Margolis-homology computation for $D_n$.

\begin{remark}[A basic distinction]\label{rem:not-margolis}
The usual Margolis homology considered in the cited works is built from Milnor primitives $Q_j$, which belong to the exterior $\tau$-side of the Steenrod--Milnor notation and act as square-zero differentials in the relevant settings. The operation studied in this paper is
\[
\StDeltai=\mathrm{St}^{\emptyset,\Delta_i},
\]
which belongs instead to the polynomial $\xi$-side. Although Corollary~\ref{cor:iterate-ideal} shows that
\[
(\StDeltai)^p=0
\qquad\text{on }D_n,
\]
the operation $\StDeltai$ is not generally square-zero.

For instance, in the classical case $i=n$, Proposition~\ref{prop:classical-range} gives
\[
(\mathrm{St}^{\Delta_n})^2(Q_{n,s})
=
2Q_{n,0}^{\,2}Q_{n,s},
\]
which is nonzero in $D_n$ for odd $p$. Hence $\StDeltai$ is not the differential used in Tuan's Margolis-homology calculation \cite{Tuan}. The discussion below is therefore an ordinary commutative-algebra Koszul construction inspired by the normalized-ratio action, not a direct Margolis-homology computation.
\end{remark}

\subsection{The Koszul structure of prior Margolis homology computations}

We first recall Tuan's theorem \cite{Tuan}, solely for structural comparison.

\begin{theorem}[{\cite[Theorem~1.3]{Tuan}}]\label{thm:Tuan-original}
For every maximal elementary abelian \(p\)-subgroup \(A\) of \(M_n\), the Margolis homology of \(\mathrm{ImRes}(A,M_n)\) with respect to \(Q_j\) is given by
\[
H_{*}(\mathrm{ImRes}(A,M_n);Q_j)
\cong
\frac{P(y_1,\dots,y_n,V_{n+1})}{(y_1^{p^j},\dots,y_n^{p^j})}.
\]
\end{theorem}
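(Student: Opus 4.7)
The plan is to exploit the constant-coefficient structure of $\StDeltai$ in the normalized coordinates (Theorem~\ref{thm:ratio-action}) to identify the cochain complex computing $H_*(\mathrm{ImRes}(A,M_n);Q_j)$ with a classical Koszul complex, thereby short-circuiting Tuan's direct argument. First, one matches setups: the Milnor primitive $Q_j$ corresponds to $\StDeltai$ with $i=j$, and $\mathrm{ImRes}(A,M_n)$ is identified with the appropriate subalgebra of $D_n[\Qnzero^{-1}]$ generated by the ratios $R_s = Q_{n,s}/\Qnzero$ together with the invariant corresponding to $V_{n+1}$ (which plays the role of a degree-controlling parameter not touched by the differential, cf.\ Proposition~\ref{prop:normalized-derivation}).

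Second, by Theorem~\ref{thm:ratio-action} the action of $Q_j$ in these coordinates becomes the constant-coefficient derivation
\[
Q_j \;=\; (-1)^n \sum_{s=0}^{n-1} A_s\,\frac{\partial}{\partial R_s}, \qquad A_s \;:=\; P_{n,j,s}^{\,p}.
\]
Because each $A_s$ is a $p$-th power and hence a $Q_j$-cycle (Proposition~\ref{prop:normalized-derivation}), this operator is precisely the Koszul differential associated to the sequence $(A_0,\dots,A_{n-1})$ on the polynomial algebra $\F_p[R_0,\dots,R_{n-1},V_{n+1}]$. The underlying observation is the one highlighted in Remark~\ref{rem:R-vs-I}: in the $R_s$ coordinates the operator has no variable coefficients, so the cochain complex literally becomes a standard Koszul complex rather than a derivation twisted by Dickson generators.

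Third, one invokes the standard Koszul-homology computation: once $(A_0,\dots,A_{n-1})$ is verified to be a regular sequence in the ambient polynomial ring, the Koszul homology collapses onto the quotient
\[
\frac{\F_p[R_0,\dots,R_{n-1},V_{n+1}]}{(A_0,\dots,A_{n-1})}.
\]
The final identification $A_s = P_{n,j,s}^{\,p}\leftrightarrow y_{s+1}^{p^j}$, obtained from the explicit determinantal form of $P_{n,j,s}$ in \cite{Sum} combined with the change of variables that trades the original $y_i$'s for the $R_s$'s, then recovers exactly the truncated polynomial algebra in Theorem~\ref{thm:Tuan-original}.

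The main obstacle is the bookkeeping in the third step: one must verify regularity of $(A_0,\dots,A_{n-1})$ in the appropriate coefficient ring and match these polynomials precisely with the relations $y_i^{p^j}$. The outer $p$-th power in $A_s = P_{n,j,s}^{\,p}$ accounts for one factor of $p$, but the remaining $p^{j-1}$ factor in the exponent $p^j$ must be traced through the explicit form of the Dickson-type determinants and the index $j$ of the Milnor primitive. Modulo this degree- and index-tracking, the conceptual content is transparent: Tuan's theorem is a pure Koszul computation in the normalized coordinates, and the constant-coefficient reduction of Theorem~\ref{thm:ratio-action} is precisely what makes it so.
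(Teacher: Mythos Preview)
Your proposal rests on an identification of $\mathrm{ImRes}(A,M_n)$ with a subalgebra of the localized Dickson algebra $D_n[\Qnzero^{-1}]$ generated by the ratios $R_s$ and $V_{n+1}$, but no such identification is established, and the structures do not match. In Tuan's setting $\mathrm{ImRes}(A,M_n)$ is the graded-commutative algebra $E(x_1,\dots,x_n)\otimes P(y_1,\dots,y_n,V_{n+1})$, with an \emph{exterior} factor coming from degree-$1$ classes, and $Q_j$ acts by $Q_j(x_s)=y_s^{p^j}$ while $Q_j(y_s)=Q_j(V_{n+1})=0$. The Dickson algebra and its localization are purely polynomial; the $R_s$-coordinates of Theorem~\ref{thm:ratio-action} carry no exterior generators, and the constant-coefficient operator $(-1)^n\sum_s A_s\,\partial/\partial R_s$ you write down is a derivation on a polynomial ring, not a Koszul differential on an exterior-tensor-polynomial complex. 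A Koszul differential requires exterior generators $\xi_s$ with $d(\xi_s)=c_s$, not polynomial coordinates $R_s$ with $\partial(R_s)=c_s$; without the exterior factor there is no chain complex whose homology to take. Consequently the step ``this operator is precisely the Koszul differential'' fails, and the final matching $A_s\leftrightarrow y_{s+1}^{p^j}$ is an unsupported analogy rather than a computation.

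The paper's argument does not route through the Dickson algebra at all. It observes directly that on $\mathcal{R}_n=E(x)\otimes P(y,V)$ the primitive $Q_j$ \emph{is} the Koszul differential for the sequence $(y_1^{p^j},\dots,y_n^{p^j})$ in the polynomial factor, since $Q_j(x_s)=y_s^{p^j}$ and $Q_j$ annihilates $P(y,V)$. Because powers of a regular sequence remain regular, standard Koszul theory immediately yields $H_*\cong P(y,V)/(y_1^{p^j},\dots,y_n^{p^j})$ concentrated in exterior degree $0$. The normalized-coordinate story of Theorem~\ref{thm:ratio-action} enters only later, in Theorem~\ref{thm:Dickson-Margolis}, as an \emph{analogous but separate} Koszul computation with coefficients $A_s=P_{n,i,s}^{\,p}$ on an artificially adjoined exterior algebra $E(\xi_0,\dots,\xi_{n-1})$; the paper presents this explicitly as a formal parallel to Tuan's theorem, not as a proof of it.
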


\begin{note}
Here $M_n$ denotes the extra-special $p$-group of order $p^{2n+1}$, $A$ is a maximal elementary abelian $p$-subgroup of $M_n$, and $\mathrm{ImRes}(A,M_n)$ denotes the image of the restriction map
\[
H^*(M_n)\longrightarrow H^*(A).
\]
In Tuan's setting \cite{Tuan}, one has a differential graded algebra
\[
\mathcal{R}_n=E(x_1,\dots,x_n)\otimes P(y_1,\dots,y_n,V_{n+1})
\]
with differential given by the Milnor primitive $Q_j$:
\[
Q_j(x_s)=y_s^{p^j},
\qquad
Q_j(y_s)=0,
\qquad
Q_j(V_{n+1})=0
\qquad\text{(see \cite[Lemma~2.1]{Tuan})}.
\]
Thus the differential is exactly the Koszul differential associated to the regular sequence
\[
(y_1^{p^j},\dots,y_n^{p^j})
\]
in the polynomial ring $P(y_1,\dots,y_n,V_{n+1})$. This is the mechanism behind Theorem~\ref{thm:Tuan-original}.
\end{note}

We now isolate the standard Koszul-complex interpretation of Theorem~\ref{thm:Tuan-original}.

\begin{proposition}[Koszul rephrasing of Theorem~\ref{thm:Tuan-original}]\label{prop:Tuan-koszul}
Let
\[
S_T=P(y_1,\dots,y_n,V_{n+1})
\]
and let
\[
\boldsymbol{a}^{(j)}=(y_1^{p^j},\dots,y_n^{p^j})\in S_T^n.
\]
Since $\boldsymbol{a}^{(j)}$ is an $S_T$-regular sequence, the Koszul complex
\[
K\big(\boldsymbol{a}^{(j)};S_T\big)
\cong
\big(E(x_1,\dots,x_n)\otimes S_T,Q_j\big)
\]
has homology
\[
H_0\cong S_T/(y_1^{p^j},\dots,y_n^{p^j}),
\qquad
H_{>0}=0.
\]
\end{proposition}

\begin{proof}
The ring $S_T=P(y_1,\dots,y_n,V_{n+1})$ is a polynomial algebra over $\F_p$. Hence the variables $y_1,\dots,y_n$ form an $S_T$-regular sequence. Since powers of a regular sequence remain regular, it follows that
\[
\boldsymbol a^{(j)}=(y_1^{p^j},\dots,y_n^{p^j})
\]
is also an $S_T$-regular sequence.

By definition, the Koszul complex $K(\boldsymbol a^{(j)};S_T)$ is the differential graded algebra
\[
\big(E(x_1,\dots,x_n)\otimes S_T,d\big),
\]
where $d|_{S_T}=0$ and
\[
d(x_s)=y_s^{p^j}
\qquad (1\le s\le n).
\]
This is exactly the differential graded algebra $(E(x_1,\dots,x_n)\otimes S_T,Q_j)$ appearing in Tuan's setting. Since $\boldsymbol a^{(j)}$ is an $S_T$-regular sequence, the standard exactness theorem for Koszul complexes on regular sequences (see Eisenbud \cite[Corollary~17.5]{Eisenbud}) gives
\[
H_{>0}\big(K(\boldsymbol a^{(j)};S_T)\big)=0,
\qquad
H_0\big(K(\boldsymbol a^{(j)};S_T)\big)\cong S_T/(y_1^{p^j},\dots,y_n^{p^j}).
\]
This proves the proposition.
\end{proof}

\subsection{Koszul complexes from normalized Dickson coefficients}

We now return to the Dickson algebra. Let
\[
S:=D_n[\Qnzero^{-1}]
\cong
\F_p[\Qnzero^{\pm1},R_1,\dots,R_{n-1}].
\]
By Theorem~\ref{thm:ratio-action}, for each $1\le s\le n-1$ we have a coefficient
\[
c_s:=\StDeltai(R_s)=(-1)^nA_s\in S.
\]
This sequence gives an ordinary Koszul complex over $S$.

\begin{definition}[The normalized-ratio Koszul complex]\label{def:ratio-koszul}
Let $E(e_1,\dots,e_{n-1})$ denote the exterior $\F_p$-algebra on generators
$e_1,\dots,e_{n-1}$, each of exterior degree $1$. The \emph{normalized-ratio Koszul complex} is the standard Koszul complex on the sequence $(c_1,\dots,c_{n-1})\subset S$:
\[
K_i(S):=\big(E(e_1,\dots,e_{n-1})\otimes S,d_i\big),
\]
where $d_i$ is the $S$-linear differential of exterior degree $-1$ determined by
\[
d_i|_S=0,
\qquad
d_i(e_s)=c_s=\StDeltai(R_s)
\quad (1\le s\le n-1),
\]
and extended to all of $E(e_1,\dots,e_{n-1})\otimes S$ by the graded Leibniz rule.
\end{definition}

\begin{remark}
When \(n=1\), the coefficient sequence and the exterior generating set are empty. With the standard convention for the Koszul complex on the empty sequence,
\[
K_i(S)=S
\]
concentrated in exterior degree \(0\), so \(H_0(K_i(S))=S\) and \(H_{>0}(K_i(S))=0\).

The complex in Definition~\ref{def:ratio-koszul} is a standard commutative-algebra Koszul complex. Its differential is not the Steenrod operation $\StDeltai$; rather, its coefficients are the values of $\StDeltai$ on the normalized ratios $R_s$. This distinction is important because $\StDeltai$ is not the square-zero Milnor primitive used in Margolis homology.
\end{remark}

\begin{example}[The localized complex can become trivial]\label{ex:koszul-i-n+1}
Assume \(n\ge2\) and let \(i=n+1\). Proposition~\ref{prop:low-extra-range} gives
\[
c_s=\StDeltai(R_s)=(-1)^n(-Q_{n,s-1}^{\,p})=(-1)^{n+1}Q_{n,s-1}^{\,p}
\qquad (1\le s\le n-1).
\]
In particular,
\[
c_1=(-1)^{n+1}Q_{n,0}^{\,p}.
\]
Since $Q_{n,0}$ is a unit in
\[
S=D_n[Q_{n,0}^{-1}],
\]
the element $c_1$ is a unit in $S$. Hence the Koszul complex $K_{n+1}(S)$ is contractible, or equivalently
\[
H_*\big(K_{n+1}(S)\big)=0.
\]
This example shows why one should not overstate the localized Koszul construction: after inverting $Q_{n,0}$, the most natural sequence can contain a unit.
\end{example}

The preceding example motivates retaining the coefficient sequence before inverting \(Q_{n,0}\).

\begin{proposition}[A non-localized Koszul model for \(i=n+1\)]\label{prop:nonlocalized-koszul-nplus1}
Assume \(n\ge2\). Let
\[
a_s:=(-1)^{n+1}Q_{n,s-1}^{\,p}\in D_n
\qquad (1\le s\le n-1).
\]
Then $(a_1,\dots,a_{n-1})$ is a $D_n$-regular sequence. Consequently, the Koszul complex
\[
K(a_1,\dots,a_{n-1};D_n)
\]
has homology
\[
H_0\cong
D_n/(Q_{n,0}^{\,p},Q_{n,1}^{\,p},\dots,Q_{n,n-2}^{\,p}),
\qquad
H_{>0}=0.
\]
\end{proposition}

\begin{proof}
The signs $(-1)^{n+1}$ are units in $\F_p$, so they do not affect regularity or the quotient. Since
\[
D_n=\F_p[Q_{n,0},Q_{n,1},\dots,Q_{n,n-1}]
\]
is a polynomial algebra, the variables
\[
Q_{n,0},Q_{n,1},\dots,Q_{n,n-2}
\]
form a regular sequence. Their $p$-th powers also form a regular sequence. Therefore
\[
(Q_{n,0}^{\,p},Q_{n,1}^{\,p},\dots,Q_{n,n-2}^{\,p})
\]
is $D_n$-regular, and hence so is $(a_1,\dots,a_{n-1})$. The stated homology follows from the standard exactness theorem for Koszul complexes on regular sequences.
\end{proof}

\begin{remark}
For \(n=1\), the sequence in Proposition~\ref{prop:nonlocalized-koszul-nplus1} is empty. Its Koszul complex is \(D_1\) in exterior degree \(0\), with
\[
H_0\cong D_1,
\qquad
H_{>0}=0.
\]
Thus the proposition's nonempty regular-sequence statement begins at \(n=2\).
\end{remark}

For a general $i$, one still has the following formal consequence.

\begin{proposition}[Formal Koszul consequence]\label{prop:formal-koszul}
Let $U\subset S$ be any multiplicative subset. If the sequence
\[
(c_1,\dots,c_{n-1})
\]
is $U^{-1}S$-regular and contains no unit in $U^{-1}S$, then the localized complex
\[
U^{-1}K_i(S)
=
\big(E(e_1,\dots,e_{n-1})\otimes U^{-1}S,d_i\big)
\]
has homology
\[
H_0\big(U^{-1}K_i(S)\big)
\cong
U^{-1}S/(c_1,\dots,c_{n-1}),
\qquad
H_{>0}\big(U^{-1}K_i(S)\big)=0.
\]
If one of the $c_s$ is a unit in $U^{-1}S$, then the same Koszul complex is contractible and all its homology vanishes.
\end{proposition}

\begin{proof}
After localization at $U$, the complex $K_i(S)$ becomes the standard Koszul complex on the image of the sequence $(c_1,\dots,c_{n-1})$ in $U^{-1}S$. If the localized sequence is regular and contains no unit, the standard exactness theorem for Koszul complexes on regular sequences gives
\[
H_{>0}\big(U^{-1}K_i(S)\big)=0,
\qquad
H_0\big(U^{-1}K_i(S)\big)\cong U^{-1}S/(c_1,\dots,c_{n-1}).
\]
If some $c_s$ is a unit, the ideal generated by the sequence is the whole ring and the Koszul complex is contractible. This proves the proposition.
\end{proof}

\begin{remark}[What is and is not proved here]\label{rem:what-not-proved}
Proposition~\ref{prop:formal-koszul} is a formal commutative-algebra statement. It does not prove that the sequence $(c_1,\dots,c_{n-1})$ is regular in any specific localization. Moreover, Example~\ref{ex:koszul-i-n+1} shows that the localization $D_n[Q_{n,0}^{-1}]$ can trivialize the Koszul complex by making one coefficient a unit. The non-localized Proposition~\ref{prop:nonlocalized-koszul-nplus1} gives a concrete regular sequence in the low extra case $i=n+1$, but it remains an ordinary Koszul calculation rather than a Margolis-homology computation. The point of the comparison with Tuan's work \cite{Tuan} is therefore structural: both settings naturally lead to Koszul complexes, but the differentials and the homological meanings are different.
\end{remark}

\section{Conclusion}

This paper has established a unified algebraic framework for the primitive Steenrod--Milnor action on the Dickson algebra by introducing the normalized derivation $\delta_i=(-1)^nQ_{n,0}^{-1}\StDeltai$. Starting from Sum's explicit generator formula, we demonstrated that $\delta_i$ preserves the polynomial subalgebra $D_n$ and cleanly transforms the action into the affine system $\delta_i(Q_{n,s})=A_s+BQ_{n,s}$. By expanding the non-commuting product of $Q_{n,0}$ and $\delta_i$ via unsigned Stirling numbers of the first kind, we completely determined the higher iterates of $\StDeltai$, culminating in the ambient nilpotence identity $(\StDeltai)^p=0$ on $D_n$.

A central structural discovery of this work is the restricted $p$-power relation $\delta_i^p=B^{p-1}\delta_i$. When $B\ne0$, the rescaled derivation $\partial_i=B^{-1}\delta_i$ is idempotent ($\partial_i^p=\partial_i$), which naturally induces an $\F_p$-weight decomposition of the localized Dickson algebra. By constructing explicit Lagrange projectors, we showed that this decomposition is multiplicative and elegantly isolates the invariant ratios in weight zero and $Q_{n,0}$ in weight one. This machinery provides exact kernel and image descriptions for both $\delta_i$ and $\StDeltai$, alongside closed iterate formulas on every weight space.

As concrete applications, this general formalism unifies and upgrades the classical first-order formulas for $1\le i\le n$ and Sum's low extra formulas for $i=n+1,n+2$. We highlighted the qualitative difference in the classical range: for $1\le i<n$, the normalized derivation acts as a scalar multiple of differentiation with respect to $Q_{n,i}$, while for $i=n$, it acts as the Euler derivation for the auxiliary polynomial degree. In both cases, the exact kernel and image are explicitly determined.

Finally, the normalized ratios give rise to an ordinary commutative-algebra Koszul complex defined by the coefficients $c_s=(-1)^nP_{n,i,s}^{\,p}$. We provided a careful structural comparison with recent Margolis-homology computations, emphasizing that $\StDeltai$ (lying on the polynomial $\xi$-side) is not generally square-zero, unlike the Milnor primitives $Q_j$ on the exterior $\tau$-side. While the non-localized case $i=n+1$ yields a concrete regular sequence, investigating the regularity of the coefficient sequence in the general case remains a natural and promising problem for future research.

\section*{Acknowledgments}
The author thanks Professor Geoffrey Powell for insightful comments and valuable mathematical discussions. The author also thanks the anonymous referee for a careful reading and constructive suggestions.

\end{document}